\theoremstyle{plain}
\newtheorem{theorem}{Theorem}[section]
\newtheorem{corollary}{Corollary}[section]
\newtheorem{lemma}{Lemma}[section]
\numberwithin{equation}{section}
\begin{document}
\title[Discrete extrapolation theorem]{Discrete Rubio de Francia
extrapolation theorem via factorization of weights and iterated algorithms}
\author{\textbf{S. H. Saker}$^{1}$,\textbf{\ A. I. Saied}$^{2}$ \textbf{and
R. P. Agarwal}$^{3}$}
\address{$^{1}$Department of Mathematics, Faculty of Science, Mansoura
University, Mansoura-Egypt, e-mail: shsaker@mans.edu.eg\\
$^{2}$Department of Mathematics, Faculty of Science,\ Benha university,
Benha- Egypt, e-mail: as0863289@gmail.com\\
$^{3}$Department of Mathematics, Texas A \& M University- Kingsvilie, \\
Texas, 78363, USA.\\
E-mail: Ravi.Agarwal@tamuk.edu}
\maketitle

\begin{abstract}
In this paper, we prove a discrete Rubio de Francia extrapolation theorem
via factorization of discrete Muckenhoupt weights and discrete iterated
Rubio de Francia algorithm and its duality. Precisely, we will prove that if
for some $p_{0}>1$ and $w\in \mathcal{A}_{p_{0}}$ the $\ell ^{p_{0}}(w)$
inequality $\left\Vert \mathcal{T}f\right\Vert _{\ell ^{p_{0}}(w)}\leq
C\left\Vert f\right\Vert _{\ell ^{p_{0}}(w)}$ holds for pairs of the form ($%
\mathcal{T}f,f)$, then for all $p$ and $w\in \mathcal{A}_{p}$ the $\ell
^{p}(w)$ inequality $\left\Vert \mathcal{T}f\right\Vert _{\ell ^{p}(w)}\leq
C\left\Vert f\right\Vert _{\ell ^{p}(w)}$ also holds for such pairs, where $%
\mathcal{T}$ is a given operator. The new approach, among other things,
allows us to obtain explicit estimates of the constants and do not assume
that the operator $\mathcal{T}$ is linear or even sublinear. The theorem
will be proved for a fixed\ exponent $1<p<\infty $ and the case when $0<p<1$
is an open problem.

\bigskip

\textbf{2010 Mathematics Subject Classification: \ }44A55\textbf{,\ }26D15,
40A05, 39A70, 46B45.

\bigskip

\textbf{Key words and phrases: }Extrapolation theorem, Muckenhoupt classes,
Bounded operator\ and Rubio de Francia algorithm.
\end{abstract}

\section{Introduction}

The study of boundedness of operators is one of the main subjects in modern
harmonic analysis. The ideas related to the boundedness of operators and the
weighted inequalities appeared with the birth of singular integrals but a
better understanding of the subject was only in the 1970s. In 1972
Muckenhoupt \cite{M} introduced the $A_{p}-$weights (now often referred to
as Muckenhoupt weights): for a given exponent $p>1$ a weight $w$ is said to
belongs to $A_{p}$ if there exists a real constant $\mathcal{C}>1$ such
that, 
\begin{equation}
\left( \frac{1}{\left\vert I\right\vert }\int_{I}w(x)dx\right) \left( \frac{1%
}{\left\vert I\right\vert }\int_{I}w^{-\frac{1}{p-1}}(x)dx\right) ^{p-1}\leq 
\mathcal{C},  \label{Ap}
\end{equation}%
holds for every interval $I\subseteq I_{0}$. The infimum of all such $%
\mathcal{C}$ is denoted by $[w]_{A_{p}}$ and is called the $A_{p}\emph{-}$%
norm. The full characterization of the weights $w,$ for which the
Hardy-Littlewood maximal operator%
\begin{equation}
\mathcal{M}f(x):=\sup_{x\in I}\frac{1}{\left\vert I\right\vert }%
\int_{I}f(y)dy,  \label{hl}
\end{equation}%
is bounded on $L_{w}^{p}(\mathbb{R}^{+})$ by means of the so-called $A_{p}$%
\emph{-condition} was achieved by Muckenhoupt and the weighted norm
inequality%
\begin{equation}
\int_{0}^{\infty }w(t)\left( \mathcal{M}f(t)\right) ^{p}dt\leq
C\int_{0}^{\infty }w(t)f^{p}(t)dt.  \label{discrete hardy3}
\end{equation}%
has been proved for a nonnegative function $f\in L_{w}^{p}(\mathbb{R}^{+})$
and the constant $C$ depends of the norm $[w]_{A_{p}}$ of the weight $w.$
The weight $w$ is in the Muckenhoupt class $A_{1}$\ if there exists a
constant $\mathcal{C}>1$\ such that%
\begin{equation}
\mathcal{M}w(x)\leq \mathcal{C}w(x),\text{ \ for all }x\in \mathbb{R}^{+},
\label{1}
\end{equation}%
where the operator $f\rightarrow \mathcal{M}f$\ is the Hardy-Littlewood
maximal\ operator. The infimum of all such $\mathcal{C}$ is denoted by $%
[w]_{A_{1}}$ and is called the $A_{1}\emph{-}$norm.

Muckenhoupt's result became a landmark in the theory of weighted norm
inequalities because most of the previously known results for classical
operators had been obtained for special classes of weights (like power
weights) and has been extended to cover several operators like Hardy's
operator, Hilbert's operator, Calder\'{o}n-Zygmund's singular operator,
fractional integral operator. For more details about the properties of
Muckenhoupt weights and their applications, we refer the reader to the
papers\ \cite{car, Neu, Acta, Bull} and the references cited therein. By H%
\"{o}lder's inequality it is easy to prove that if $w$ satisfies the $A_{p}$
condition (\ref{Ap}) then $w$ satisfies the Muckenhoupt condition 
\begin{equation}
\left( \frac{1}{\left\vert I\right\vert }\int_{I}w(x)dx\right) \left( \frac{1%
}{\left\vert I\right\vert }\int_{I}w^{-\frac{1}{p_{1}-1}}(x)dx\right)
^{p_{1}-1}\leq \mathcal{C},\text{ for }p_{1}>p.
\end{equation}%
The main problem that has been solved in \cite{M} by Muckenhoupt is the
(backward propagation) self-improving property of the Muckenhoupt class of
weights which sates that: If $w\in A_{p}(\mathcal{C)}$ then there exists $%
\epsilon =\epsilon (p,\mathcal{C})>0$ such that%
\begin{equation}
\left( \frac{1}{\left\vert I\right\vert }\int_{I}w(t)dt\right) \left( \frac{1%
}{\left\vert I\right\vert }\int_{I}w^{-\frac{1}{q-1}}(t)dt\right) ^{q-1}\leq 
\mathcal{C}_{1},  \label{Api}
\end{equation}%
for all $q=p-\epsilon $, where the constant $\mathcal{C}_{1}=\mathcal{C}%
_{1}(p,\mathcal{C}).$ In other words, Muckenhoupt result for self-improving
property shows that if $w\in A_{q}(\mathcal{C})$ then there exists $\epsilon
>0$ such that $w\in A_{q-\epsilon }(\mathcal{C}_{1}),$ and then%
\begin{equation}
A_{q}(\mathcal{C})\subset A_{q-\epsilon }(\mathcal{C}_{1}).  \label{AP1}
\end{equation}%
In 1982 Rubio de Francia \cite{193} discovered a very different perspective
theorem on the theory of weighted norm inequalities. This theorem\ called\
the extrapolation theorem which studies the boundedness of operators in
different spaces via the Muckenhoupt weights. In particular, it has been
proved that for a given sublinear operator $\mathcal{T},$\ for some $p_{0}$
such that $1\leq p_{0}<\infty ,$ and every $w\in A_{p_{0}},$ there exists a
constant $\mathcal{C}>0$\ depends on $[w]_{A_{p_{0}}}$ such that%
\begin{equation*}
\int_{0}^{\infty }\left\vert \mathcal{T}f(x)\right\vert ^{p_{0}}w(x)dx\leq 
\mathcal{C(}[w]_{A_{p_{0}}})\int_{0}^{\infty }\left\vert f(x)\right\vert
^{p_{0}}w(x)dx,
\end{equation*}%
then for every $1<p<\infty $ and every $w\in A_{p},$ there exists a constant 
$\mathcal{C}_{1}$ depends on $[w]_{A_{p}}$\ such\ that%
\begin{equation*}
\int_{0}^{\infty }\left\vert \mathcal{T}f(x)\right\vert ^{p}w(x)dx\leq 
\mathcal{C}_{1}([w]_{A_{p}})\int_{0}^{\infty }\left\vert f(x)\right\vert
^{p}w(x)dx.
\end{equation*}%
The original proof of Rubio de Francia extrapolation theorem was quite
complex and depends on a connection between vector-valued estimates and a
weighted norm inequalities. A\ more direct proof that depend only on
weighted norm inequalities was given by Garci\'{a}-Cuerva \cite{10a}.
However this approach requires two complicated lemmas on the structure of $%
A_{p}-$ weights.

\bigskip

In \cite{extrapolation} the authors proved the\ extrapolation theorem\ with
sharp bounds with a different method depends on the applications of Rubio de
Francia algorithm. In \cite{ext11} Duoandikoetxea proved the\ extrapolation
theorem for Muckenhoupt weights with explicit bounds and avoided the
iteration algorithm when $p_{0}>1.$ The proof has two steps: first prove the
desired inequality holds for $1<p<p_{0}$ and $w\in A_{1}$ and then use this
to prove the full result. The proof also requires the very deep
self-improving property of $A_{p}$ weights that: if $w\in A_{p}$ then there
exists $\epsilon >0$ such that $w\in A_{p-\epsilon }.$

\bigskip

In \cite{Carro} Carro and Lorente proved a new version of Rubio de Francia
extrapolation theorem in the setting of $B_{p}-$weights, instead of the $%
A_{p}-$weights, for pairs of positive decreasing functions defined on $%
\mathbb{R}^{+}$. A weight $w$ is said to be belong to the class $B_{p}(C)$
for $0<p<\infty \ $if it satisfies the condition 
\begin{equation}
\int_{t}^{\infty }\frac{w(x)}{x^{p}}dx\leq \frac{C}{t^{p}}\int_{0}^{t}w(x)dx%
\text{, \ for all }t>0.  \label{B0}
\end{equation}%
The smallest constant $C>0$ satisfying (\ref{B0}) is called the $B_{p}-$%
constant of the class and is denoted by $\left[ B_{p}(w)\right] $. This
class has been introduced by\ Ari\v{n}o and Muckenhoupt \cite{Arino} in
connection with the boundedness of the Hardy operator $\mathcal{H}%
f(t)=(1/t)\int_{0}^{t}f(x)dx$, for $t>0$ on the space $L_{w}^{p}[\mathbb{R}%
^{+}).$ The proof also requires the very deep self-improving property of $%
B_{p}$ weights that: if $w\in B_{p}$ then there exists $\epsilon >0$ such
that $w\in B_{p-\epsilon }.$

The theory of Rubio de Francia extrapolation has found a host of
applications in harmonic analysis. One of the important applications is due
to Duoandikoetxea and Rubio de Francia \cite{71}. They have proved the
boundedness of the rough singular integrals on $L^{p}(w)$ with kernels via
the Muckenhoupt weights $w\in A_{p}$. The key to the proof was the
extrapolation theorem, since this reduced the problem to proving that the
singular operator is bounded on $L^{2}(w)$,$\ w\in A_{2}.$\ 

\bigskip

In the last years, there has been renewed interest in the area of discrete
harmonic analysis and then it becomes an active field of research. For
example, the study of discrete\ $\ell _{p}(w)$ analogues for\ $L^{p}(w)$
results has been considered by some authors, see for example \cite{3, 4, 5,
7, 19, PAMS, 23, 24, samir} and the references cited therein. In \cite{Benet}
the authors mentioned that the study of discrete norm inequalities is not an
easy task and more difficult to analyze than its integral counterparts and
discovered that the discrete conditions do not correspond, in any natural
way, with those that are obtained by discretization the results of functions
but the reverse is true. This means that what goes for sums goes, with the
obvious modifications, for integrals which in fact proved the first part of
basic principle of Hardy, Littlewood and Polya \cite[p. 11]{Hardy}.

Indeed the proofs for series translate immediately and become much simpler,
when applied to integrals. For illustration, we present the following
examples to show the big difference of discrete and continuous inequalities.
In \cite{krnvsk1} the author proved the inequality 
\begin{equation}
\int_{a}^{b}\frac{\lambda (t)}{\Lambda ^{1-\alpha }(t)}\left( \mathcal{M}%
v\left( t\right) \right) ^{\beta }dt\leq \left( \frac{\beta }{\beta -\alpha }%
\right) ^{\beta }\int_{a}^{b}\frac{\lambda (t)}{\Lambda ^{1-\alpha }(t)}%
v^{\beta }\left( t\right) dt,  \label{hk}
\end{equation}%
where 
\begin{equation*}
\mathcal{M}v(t):=\frac{1}{\Lambda (t)}\int_{a}^{t}\lambda (s)v\left(
s\right) ds\text{, and }\Lambda (t)=\int_{a}^{t}\lambda (s)ds.
\end{equation*}%
The discrete analogy of this inequality is given by (translation from
integral to series directly): Let $v$ be a positive sequence and $\lambda $
be a nonnegative sequence and $\Lambda \lbrack
1,n]=\sum\limits_{k=1}^{n}\lambda (k)$ for all $n\in \mathbb{Z}_{+}$.
Suppose that $0<\alpha <1$, $\beta <0$ or $\beta \geq 1$ and $\mathcal{M}v$
is defined by%
\begin{equation*}
\mathcal{M}v(k):=\frac{1}{\Lambda \lbrack 1,k]}\sum\limits_{s=1}^{k}\lambda
(s)v\left( s\right) .
\end{equation*}%
Then%
\begin{equation}
\sum\limits_{k=1}^{N}\frac{\lambda (k)}{\Lambda ^{1-\alpha }[1,k]}\left( 
\mathcal{M}v\left( k\right) \right) ^{\beta }\leq \left( \frac{\beta }{\beta
-\alpha }\right) ^{\beta }\sum\limits_{k=1}^{N}\frac{\lambda (k)}{\Lambda
^{1-\alpha }[1,k]}v^{\beta }\left( k\right) .  \label{KL1}
\end{equation}%
In the unweighted case $\lambda (k)=1$ and $\beta -\alpha =1$, the
inequality (\ref{KL1}) becomes 
\begin{equation}
\sum\limits_{k=1}^{N}\frac{1}{k^{2}}\left( \sum\limits_{s=1}^{k}v\left(
s\right) \right) ^{\beta }\leq \beta ^{\beta }\sum\limits_{k=1}^{N}\frac{1}{%
k^{1-\alpha }}v^{\beta }\left( k\right) .  \label{3}
\end{equation}%
Numerical (counter) example: Set in (\ref{3}) $N=4$, $\beta =1.2$, $\alpha
=0.2$, $v(1)=100$, $v(2)=v(3)=v)4)=1$, we see that the right side is given by%
\begin{equation*}
\sum\limits_{k=1}^{N}\frac{1}{k^{2}}\left( \sum\limits_{s=1}^{k}v\left(
s\right) \right) ^{\beta }=359.587
\end{equation*}%
and the left hand side is given by%
\begin{equation*}
\beta ^{\beta }\sum\limits_{k=1}^{N}\frac{1}{k^{1-\alpha }}v^{\beta }\left(
k\right) =314.263
\end{equation*}%
For the case $\beta <0$ and $\beta -\alpha =-1,$ then (\ref{KL1}) becomes
(unweighted case): 
\begin{equation}
\sum\limits_{k=1}^{N}\left( \sum\limits_{s=1}^{k}v\left( s\right) \right)
^{\beta }\leq (-\beta )^{\beta }\sum\limits_{k=1}^{N}\frac{1}{k^{1-\alpha }}%
v^{\beta }\left( k\right) .  \label{1.12}
\end{equation}%
By choosing $\beta =-0.9$, $\alpha =0.1$, $N=5,$ $v(1)=1,$ $v(2)=5$, $v(3)=9$%
, $v(4)=13,$ $v(5)=17.$ Numerical gives the right hand side 
\begin{equation*}
\sum\limits_{k=1}^{N}\left( \sum\limits_{s=1}^{k}v\left( s\right) \right)
^{\beta }=1.36913\text{, }
\end{equation*}%
and the left hand side%
\begin{equation*}
(-\beta )^{\beta }\sum\limits_{k=1}^{N}\frac{1}{k^{1-\alpha }}v^{\beta
}\left( k\right) =1.3406.
\end{equation*}%
This shows that (\ref{3})\ and (\ref{1.12}) are not valid and a constant $%
\zeta >1$ should be exists in the right hand side. This proves that the
discrete conditions do not correspond, in any natural way, with those that
are obtained by discretization the results of functions. So the inequality (%
\ref{KL1}) should be 
\begin{equation*}
\sum\limits_{k=1}^{N}\frac{\lambda (k)}{\Lambda ^{1-\alpha }[1,k]}\left( 
\mathcal{M}v\left( k\right) \right) ^{\beta }\leq \zeta \left( \frac{\beta }{%
\beta -\alpha }\right) ^{\beta }\sum\limits_{k=1}^{N}\frac{\lambda (k)}{%
\Lambda ^{1-\alpha }[1,k]}v^{\beta }\left( k\right) ,
\end{equation*}%
where $\Lambda \lbrack 1,k]\leq \zeta \Lambda \lbrack 1,k-1]$ for some $%
\zeta >1$ and all $k$. Another choice is to put the inequality in the form 
\begin{equation}
\sum\limits_{k=1}^{N}\frac{\lambda (k)}{\Lambda ^{1-\alpha }[1,k]}\left( 
\mathcal{M}v\left( k-1\right) \right) ^{\beta }\leq \left( \frac{\beta }{%
\beta -\alpha }\right) ^{\beta }\sum\limits_{k=1}^{N}\frac{\lambda (k)}{%
\Lambda ^{1-\alpha }[1,k]}v^{\beta }\left( k\right) .  \label{Ka1}
\end{equation}%
For the unweighted case ($\lambda (s)=1$ for every $s$) and $\beta -\alpha
=1 $, inequality (\ref{Ka1}) becomes:%
\begin{equation}
\sum_{k=1}^{N}\frac{1}{k^{1-\alpha }}\left( \frac{1}{k-1}%
\sum_{s=1}^{k-1}v(s)\right) ^{\beta }\leq \beta ^{\beta }\sum_{k=1}^{N}\frac{%
1}{k^{1-\alpha }}v^{\beta }(k).  \label{kka1}
\end{equation}%
The numerical gives us that: For $N=4,$ $\beta =1.2,$ $\alpha =0.2,$ $%
v_{1}=100,v_{2}=1,$ $v_{3}=1,$ $v_{4}=1$, then the left-side of (\ref{kka1})
gives:%
\begin{equation*}
\frac{1}{1^{0.8}}(0)^{1.2}+\frac{1}{2^{0.8}}\left( v_{1}\right) ^{1.2}+\frac{%
1}{3^{0.8}}\left( \frac{v_{1}+v_{2}}{2}\right) ^{1.2}+\frac{1}{4^{0.8}}%
\left( \frac{v_{1}+v_{2}+v_{3}}{3}\right) ^{1.2}=212.922,
\end{equation*}%
and the right-side gives:%
\begin{equation*}
1.2^{1.2}\ast (v_{1}^{1.2}+\frac{v_{2}^{1.2}}{2^{0.8}}+\frac{v_{3}^{1.2}}{%
3^{0.8}}+\frac{v_{4}^{1.2}}{4^{0.8}})=314.263.
\end{equation*}%
This shows that (\ref{Ka1}) holds. For the unweighted case and $\beta <0,$ $%
\beta -\alpha =-1$, inequality (\ref{Ka1}) becomes:%
\begin{equation}
\sum_{k=1}^{N}\frac{1}{k^{1-\alpha }}\left( \frac{1}{k-1}%
\sum_{s=1}^{k-1}v(s)\right) ^{\beta }\leq \left( -\beta \right) ^{\beta
}\sum_{k=1}^{N}\frac{1}{k^{1-\alpha }}v^{\beta }(k).  \label{kka2}
\end{equation}%
For $N=5,\beta =-0.9,$ $\alpha =0.1,$ $v_{1}=1,$ $v_{2}=5,$ $v_{3}=9,$ $%
v_{4}=13,$ $v_{5}=17$, the left-side of (\ref{kka2}) gives:%
\begin{equation*}
\frac{1}{1^{0.9}}(0)^{0.9}+\frac{1}{2^{0.9}}\left( v_{1}\right) ^{-0.9}+%
\frac{1}{3^{0.9}}\left( \frac{v_{1}+v_{2}}{2}\right) ^{-0.9}
\end{equation*}%
\begin{equation*}
+\frac{1}{4^{0.9}}\left( \frac{v_{1}+v_{2}+v_{3}}{3}\right) ^{-0.9}+\frac{1}{%
5^{0.9}}\left( \frac{v_{1}+v_{2}+v_{3}+v_{4}}{4}\right) ^{-0.9}=0.7743
\end{equation*}%
and the right-side gives:%
\begin{equation*}
(0.9)^{-0.9}\ast \left( v_{1}^{-0.9}+\frac{v_{2}^{-0.9}}{2^{0.9}}+\frac{%
v_{3}^{-0.9}}{3^{0.9}}+\frac{v_{4}^{-0.9}}{4^{0.9}}+\frac{v_{5}^{-0.9}}{%
5^{0.9}}\right) =1.3516.
\end{equation*}%
This shows that (\ref{Ka1}) holds and explains our aim to prove the discrete
results and do not translate the continuous results directly to the discrete
versions and also gives an answer to the questions from the referees to our
discrete papers.

In the following, for completeness, we present some of the related results
of the discrete Muckenhoupt $\mathcal{A}_{p}$ weights and discrete
extrapolation theorem of discrete operators to show the motivation of this
paper. Throughout the paper, we assume that $1<p<\infty $ and assume that $J%
\mathbb{\subset Z}_{+}=\{1,2,3,...\},$ where $J$ is of the form $%
J=\{1,2,\ldots ,n\}$. A discrete weight $w$\ is a sequence $\left\{
w(n)\right\} _{n=1}^{\infty }$\ of nonnegative real numbers. Most often $w$
will appear in the role of the weight in $\ell _{p}(w)-$estimates, i.e. we
shall consider the norm 
\begin{equation*}
\left\Vert f\right\Vert _{\ell _{p}(w)}:=\left( \sum_{k=1}^{\infty
}w(k)\left\vert f(k)\right\vert ^{p}\right) ^{\frac{1}{p}}<\infty .
\end{equation*}%
A discrete nonnegative weight $w$ defined on $\mathbb{Z}_{+}=\{1,2,\ldots \}$
belongs to the discrete Muckenhoupt class $\mathcal{A}_{1}(C)$ for $p>1$ and 
$C>1$ if the inequality 
\begin{equation}
\left( \frac{1}{n}\sum\limits_{k=1}^{n}{w(k)}\right) {\left( \frac{1}{n}%
\sum\limits_{k=1}^{n}w^{\frac{-1}{p-1}}(k)\right) ^{p-1}}\leq C,  \label{Ap1}
\end{equation}%
holds for every $n>1.$ For a given exponent $p>1$ we define the $\mathcal{A}%
_{p}(A)-$norm of the discrete weight $w$ by the following quantity%
\begin{equation*}
\left[ w\right] _{\mathcal{A}_{p}}:=\sup_{n\geq 1}\frac{1}{n}%
\sum\limits_{k=1}^{n}w(n)\left( \frac{1}{n}\sum\limits_{k=1}^{n}w^{\frac{1}{%
1-p}}(n)\right) ^{p-1}.
\end{equation*}%
The boundedness of discrete Hardy-Littlewood maximal operator 
\begin{equation}
\mathcal{M}f(n):=\sup_{n>1}\frac{1}{n}\sum\limits_{k=1}^{n}f(k)\text{,}
\label{3.1}
\end{equation}%
where $f(n)$ is nonnegative sequence has been characterized in \cite%
{Saker2020} in terms of the Muckenhoupt weights $\mathcal{A}_{p}$. They
proved that $\mathcal{M}f$ is bounded in $\ell _{p}(w)$ iff $w\in \mathcal{A}%
_{p}$ and 
\begin{equation*}
\left\Vert \mathcal{M}f\right\Vert _{\ell ^{p}(w)}\leq C\left\Vert
f\right\Vert _{\ell ^{p}(w)},
\end{equation*}%
where the constant $C$ depends on the norm $[w]_{\mathcal{A}_{p}}.$ The
discrete weight $w$ belongs to the discrete Muckenhoupt class $\mathcal{A}%
_{1}$ if there exists a constant $C>1$\ such that 
\begin{equation*}
\mathcal{M}w(n)\leq Cw(n),\text{ \ \ \ }n\in \mathbb{Z}_{+},
\end{equation*}%
where $\mathcal{M}$ is the Hardy-Littlewood maximal operator. The infimum of
all such $C$ is denoted by $[w]_{\mathcal{A}_{1}}$ and is called the $%
\mathcal{A}_{1}\emph{-}$norm. A discrete weight\ $w$\ is said to be belong
to\ the discrete Muckenhoupt class\ $\mathcal{A}_{\infty }$ if\ there exists
a constant $C>1$ such that%
\begin{equation*}
\left( \frac{1}{n}\sum\limits_{k=1}^{n}w\right) \left( \exp \frac{1}{n}%
\sum\limits_{k=1}^{n}\log \frac{1}{w}\right) \leq C.
\end{equation*}%
The infimum of all such $C$ is denoted by $[w]_{\mathcal{A}_{\infty }}$ and
is called the $\mathcal{A}_{\infty }\emph{-}$norm. The prototypical $%
\mathcal{A}_{p}-$weights are the power weights. The authors in \cite{Saker
et al} (see also \cite{Saker2023}) investigated the following estimates for
power low discrete weights. If $p>1$ and $-1<\lambda <p-1$, then $n^{\lambda
}\in A_{p}$ and its norm is given by $\left[ n^{\lambda }\right] _{\mathcal{A%
}_{p}}\simeq \Phi (p,\lambda )$, where 
\begin{equation}
\Phi (p,\lambda )=\frac{1}{(1+\lambda )}\left( \frac{p-1}{p-\lambda -1}%
\right) ^{p-1}  \label{a1}
\end{equation}%
Taking limit as $p\rightarrow 1,$ we get the following: If $-1<\lambda <0$,
then $n^{\lambda }\in \mathcal{A}_{1}$ and the norm $\left[ n^{\lambda }%
\right] _{\mathcal{A}_{1}}\simeq \frac{1}{(1+\lambda )}$.

In \cite{saker} Saker and Agarwal \ proved the discrete Rubio de Francia
extrapolation theorem\ via the discrete\ Muckenhoupt weights. Precisely they
proved that if\ $\varphi $ is an increasing function on $(0,\infty )$ and $%
\mathcal{T}$ is a sublinear operator defined on $\mathbb{Z}_{+}$ such that%
\begin{equation*}
\sum_{k=1}^{\infty }\left\vert \mathcal{T}f(k)\right\vert ^{p_{0}}w(k)\leq
\varphi \left( \left[ w\right] _{\mathcal{A}_{p_{0}}}\right)
\sum_{k=1}^{\infty }\left\vert f(k)\right\vert ^{p_{0}}w(k),
\end{equation*}%
for some $p_{0},$ $1\leq p_{0}<\infty ,$ and every weight\ $w\in \mathcal{A}%
_{p_{0}},$ then 
\begin{equation*}
\sum_{k=1}^{\infty }\left\vert \mathcal{T}f(k)\right\vert ^{p}w(k)\leq
\varphi \left( \left[ w\right] _{\mathcal{A}_{p}}\right) \sum_{k=1}^{\infty
}\left\vert f(k)\right\vert ^{p}w(k),
\end{equation*}%
for every $p$ with\ $1<p<\infty $ and every $w\in \mathcal{A}_{p}.$ The
proof depends on the weighted norm inequality 
\begin{equation}
\left\Vert \mathcal{M}^{w}f\right\Vert _{\ell _{w}^{r}\left( \mathbb{Z}%
_{+}\right) }\leq C\left\Vert f\right\Vert _{\ell _{w}^{r}\left( \mathbb{Z}%
_{+}\right) },  \label{dg}
\end{equation}%
for $r>1$ and $f$ be a nonnegative series defined on $\mathbb{Z}_{+}$, where 
$\mathcal{M}^{w}f$ is the weighted Hardy-Littlewood maximal operator 
\begin{equation}
\left[ \mathcal{M}^{w}\left( f\right) \right] (k)=\sup_{k\in J}\frac{1}{%
\Lambda (J)}\sum_{s\in J}w(s)f(s),\text{ for }k\in \mathbb{Z}_{+},
\label{1.2q}
\end{equation}%
where $\Lambda (J)=\sum_{s\in J}w(s)$. However this approach requires two
complicated lemmas on the structure of $\mathcal{A}_{p}-$ weights and the
self-improving property of the discrete $\mathcal{A}_{p}$ Muckenhoupt
weights: if $v\in \mathcal{A}_{q}(C)$ then there exists an $\epsilon >0$ and
a constant $C_{1}=C_{1}(p,$ $C)$ such that $v\in \mathcal{A}_{q-\epsilon
}(C_{1})$.

In \cite{saker1} Saker and Agarwal proved the discrete Rubio de Francia
extrapolation theorem\ via\ discrete $\mathcal{B}_{p}$ weights by employing
the self-improving property of the discrete $\mathcal{B}_{p}$ weights that:
if $v\in \mathcal{B}_{q}(C)$ then there exists an $\epsilon >0$ and a
constant $C_{1}=C_{1}(q,$ $C)$ such that $v\in \mathcal{B}_{q-\epsilon
}(C_{1})$. A nonnegative discrete sequence $w$ defined on $\mathbb{Z}_{+}$
is said to be belong to the discrete class $\mathcal{B}_{p}(B)$ for $p>0$
and $B>0$ if $w$ satisfies the condition 
\begin{equation}
\sum_{k=n}^{\infty }\frac{w(k)}{k^{p}}\leq \frac{B}{n^{p}}\sum_{k=1}^{n}w(n),%
\text{ \ for all }n\in \mathbb{Z}_{+}.  \label{B}
\end{equation}%
In \cite{Benet} Bennett and Grosse-Erdmann proved\ that the discrete Hardy
operator 
\begin{equation*}
\mathcal{H}f(n)=\frac{1}{n}\sum_{k=1}^{n}f(k),
\end{equation*}%
is bounded\ on $\ell _{p}(w)$ for all decreasing sequence $f$ if and only if 
$w\in \mathcal{B}_{p}(B).$

\bigskip

In this paper, we give a new proof of the discrete extrapolation theorem
via\ Muckenhoupt weights $\mathcal{A}_{p}$ with explicit constants$.$ The
proof is different from the proof in \cite{saker} and simpler and more
direct, since it yields the desired inequality directly without cases of
intermediate steps and uses the iteration algorithm 
\begin{equation*}
\mathcal{N}\left( h(n)\right) :=\sum_{s=0}^{\infty }\frac{\mathcal{M}^{s}h(n)%
}{2^{s}\left\Vert \mathcal{M}\right\Vert _{\ell _{p}(w)}^{s}},
\end{equation*}%
where\ for $s>0,$ $\mathcal{M}^{s}h=\mathcal{M\circ }...$ $\mathcal{\circ M}%
h $ denotes $s$ iterations of the Hardy-Littlewood maximal operator and $%
\mathcal{M}^{0}h=h,$ and the dual iteration algorithm 
\begin{equation}
\mathcal{N}^{\prime }h(n):=\sum_{s=0}^{\infty }\frac{\left( \mathcal{M}%
^{\prime }\right) ^{s}h(n)}{2^{s}\left\Vert \mathcal{M}^{\prime }\right\Vert
_{\ell _{p^{\prime }}(w)}^{s}},
\end{equation}%
with%
\begin{equation}
\mathcal{M}^{\prime }h(n)=\frac{\mathcal{M}\left( wh\right) (n)}{w(n)},\text{
\ }n\in \mathbb{Z}_{+}\text{,}
\end{equation}%
where\ $\mathcal{M}h$ is the Hardy-Littlewood maximal operator and $\left( 
\mathcal{M}^{\prime }\right) ^{s}h=\mathcal{M}^{\prime }\mathcal{\circ }...$ 
$\mathcal{\circ M}^{\prime }h$ denotes $s$ iterations of the operator\ $%
\mathcal{M}^{\prime },$\ and\ $\left( \mathcal{M}^{\prime }\right) ^{0}h=h.$
In addition, we will use some of the properties of $\mathcal{A}_{p}$ weights
that are presented in the following lemmas for the sake of completeness.

\begin{lemma}
\cite{samir} Assume that $w$\ is a nonnegative weight and $1<p,$ $q<\infty \ 
$ are positive real numbers. Then the following properties hold.

$(i).$ If $w\in \mathcal{A}_{p},$ then $w^{\alpha }\in \mathcal{A}_{p}$ for\ 
$0\leq \alpha \leq 1$ with $\left[ w^{\alpha }\right] _{\mathcal{A}_{p}}\leq
\left( \left[ w\right] _{\mathcal{A}_{p}}\right) ^{\alpha }$.

$(ii).$ $w\in \mathcal{A}_{p}$\ if and only if $w^{1-p^{\prime }}\in 
\mathcal{A}_{p^{\prime }},$ $p^{\prime }=p/\left( p-1\right) $ and 
\begin{equation}
\ \left[ w^{1-p^{\prime }}\right] _{\mathcal{A}_{p^{\prime }}}=\left[ w%
\right] _{\mathcal{A}_{p}}^{p^{\prime }-1}.  \label{t*}
\end{equation}
\end{lemma}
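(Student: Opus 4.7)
Both parts of the lemma are essentially bookkeeping lemmas following directly from H\"older's inequality and the algebraic relation between $p$ and $p'$, so the plan is to unwind the definition of $[w]_{\mathcal{A}_p}$ and verify the claimed bounds by elementary manipulation. The main (minor) obstacle is keeping the exponents $p$, $1-p'$, $1/(1-p)$, etc., straight; there is no conceptual difficulty.

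For part $(i)$, I fix $n\geq 1$ and work on each of the two averages in the $\mathcal{A}_p$ functional separately. Since $0\leq \alpha \leq 1$, H\"older's inequality with conjugate exponents $1/\alpha$ and $1/(1-\alpha)$ applied to the product $w(k)^\alpha \cdot 1$ yields
\begin{equation*}
\frac{1}{n}\sum_{k=1}^n w(k)^\alpha \;\leq\; \Bigl(\frac{1}{n}\sum_{k=1}^n w(k)\Bigr)^\alpha,
\end{equation*}
and the identical inequality with $w$ replaced by $w^{1/(1-p)}$. Multiplying the first estimate by the $(p-1)$-th power of the second gives
\begin{equation*}
\Bigl(\frac{1}{n}\sum_{k=1}^n w(k)^\alpha\Bigr)\Bigl(\frac{1}{n}\sum_{k=1}^n w(k)^{\alpha/(1-p)}\Bigr)^{p-1} \;\leq\; \Bigl[\Bigl(\frac{1}{n}\sum w(k)\Bigr)\Bigl(\frac{1}{n}\sum w(k)^{1/(1-p)}\Bigr)^{p-1}\Bigr]^\alpha \leq [w]_{\mathcal{A}_p}^\alpha,
\end{equation*}
and taking the supremum over $n$ gives the claim.

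For part $(ii)$, I set $\sigma := w^{1-p'}$ and compute the three exponents that appear when I plug $\sigma$ into the $\mathcal{A}_{p'}$ functional. Using $p' = p/(p-1)$ I record $1-p' = -1/(p-1)$, $p'-1 = 1/(p-1)$, and crucially $1/(1-p') = 1-p$, so that $\sigma^{1/(1-p')} = w^{(1-p')(1-p)} = w$. Substituting these into the definition of $[\sigma]_{\mathcal{A}_{p'}}$ transforms the functional, at scale $n$, into exactly the $(p'-1)$-th power of the $\mathcal{A}_p$ functional of $w$ at the same scale; taking the supremum gives the identity $[w^{1-p'}]_{\mathcal{A}_{p'}} = [w]_{\mathcal{A}_p}^{p'-1}$. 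The ``if and only if'' is immediate because the involution $w\mapsto w^{1-p'}$ satisfies $(w^{1-p'})^{1-p} = w$, so the same computation applied with the roles of $p$ and $p'$ swapped recovers $w$ from $\sigma$ and reverses the implication.

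Since both parts reduce to a single application of H\"older's inequality and a routine exponent calculation respectively, I do not expect any step to pose a real obstacle; the only care required is in verifying the arithmetic $1-p' = -1/(p-1)$ and $1/(1-p') = 1-p$ that makes the $\mathcal{A}_p$ and $\mathcal{A}_{p'}$ conditions dual to one another.
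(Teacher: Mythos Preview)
Your proof is correct. The paper itself does not prove this lemma; it is quoted from \cite{samir} without argument, so there is no ``paper's own proof'' to compare against. Your approach---H\"older (equivalently, concavity of $x\mapsto x^\alpha$) for part~(i) and the exponent bookkeeping $1-p'=-1/(p-1)$, $1/(1-p')=1-p$ for part~(ii)---is exactly the standard one, and the identity $[w^{1-p'}]_{\mathcal{A}_{p'}}=[w]_{\mathcal{A}_p}^{p'-1}$ drops out precisely because raising the $\mathcal{A}_p$ functional at each scale $n$ to the $(p'-1)$-th power is monotone, so the supremum commutes with the power.
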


\begin{lemma}
\cite{samir} Let $w,$ $w_{1},$ $w_{2}$ be discrete\ weights. Then the
following properties hold.

$1)$ $w\in \mathcal{A}_{p},$ $1<p<\infty $ if and only if there exist $%
w_{1}, $ $w_{2}\in \mathcal{A}_{1}$ such that $w=w_{1}w_{2}^{1-p}.$

$2)$ If $w_{1},$\ $w_{2}\in \mathcal{A}_{p},$ then $w_{1}^{\alpha
}w_{2}^{1-\alpha }\in \mathcal{A}_{p},$ $0<\alpha \leq 1$ with a constant \
\ 
\begin{equation*}
\left[ w_{1}^{\alpha }w_{2}^{1-\alpha }\right] _{\mathcal{A}_{p}}=\left[
w_{1}\right] _{\mathcal{A}_{p}}^{\alpha }\left[ w_{2}\right] _{\mathcal{A}%
_{p}}^{1-\alpha }.
\end{equation*}

$3).$ $\mathcal{A}_{p}\subset \mathcal{A}_{q}$ for all $1<p\leq q<\infty \ $%
and\ $\left[ w\right] _{\mathcal{A}_{q}}\leq \left[ w\right] _{\mathcal{A}%
_{p}}.$
\end{lemma}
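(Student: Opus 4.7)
The plan is to split Lemma 2 into its three claims and handle them in order of increasing difficulty. For (3), I would use the observation that when $1<p\leq q<\infty$ the exponent $r:=(p-1)/(q-1)$ lies in $(0,1]$, so $w^{-1/(q-1)}=(w^{-1/(p-1)})^{r}$ and $x\mapsto x^{r}$ is concave on $(0,\infty)$. Jensen's inequality applied to the uniform probability measure on $\{1,\dots,n\}$ then yields
\[
\Bigl(\frac{1}{n}\sum_{k=1}^n w^{-1/(q-1)}(k)\Bigr)^{q-1}
\leq \Bigl(\frac{1}{n}\sum_{k=1}^n w^{-1/(p-1)}(k)\Bigr)^{p-1},
\]
so multiplying by $\tfrac{1}{n}\sum_{k=1}^n w(k)$ and taking the supremum over $n$ produces $[w]_{\mathcal{A}_q}\leq[w]_{\mathcal{A}_p}$.

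For (2), two applications of discrete H\"older with conjugate exponents $1/\alpha$ and $1/(1-\alpha)$ are enough. The first gives $\tfrac{1}{n}\sum w_1^{\alpha}w_2^{1-\alpha}\leq (\tfrac{1}{n}\sum w_1)^{\alpha}(\tfrac{1}{n}\sum w_2)^{1-\alpha}$, and the second handles the dual average $\tfrac{1}{n}\sum w_1^{-\alpha/(p-1)}w_2^{-(1-\alpha)/(p-1)}$; multiplying and regrouping factors the $\mathcal{A}_p$ ratio as $[w_1]_{\mathcal{A}_p}^{\alpha}[w_2]_{\mathcal{A}_p}^{1-\alpha}$. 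The easy direction of (1) is a close cousin: if $w=w_1 w_2^{1-p}$ with $w_i\in\mathcal{A}_1$, then $w^{-1/(p-1)}=w_1^{-1/(p-1)}w_2$, and combining H\"older in $p,p'$ with the pointwise $\mathcal{A}_1$ estimate $\tfrac{1}{n}\sum_{k=1}^n w_i(k)\leq [w_i]_{\mathcal{A}_1}w_i(n)$ controls $[w]_{\mathcal{A}_p}$ by $[w_1]_{\mathcal{A}_1}[w_2]_{\mathcal{A}_1}^{p-1}$.

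The main obstacle is the forward direction of (1): starting from $w\in\mathcal{A}_p$ and producing $w_1,w_2\in\mathcal{A}_1$ with $w=w_1 w_2^{1-p}$, i.e.\ the discrete Jones factorization. My plan is to lean on the Rubio de Francia iteration algorithms already introduced in the paper. Since $w\in\mathcal{A}_p$ makes $\mathcal{M}$ bounded on $\ell^{p}(w)$, the operator $\mathcal{N}h:=\sum_{s\geq 0}\mathcal{M}^{s}h/(2\|\mathcal{M}\|_{\ell^{p}(w)})^{s}$ turns any nonnegative seed $h$ into an $\mathcal{A}_1$ weight $\mathcal{N}h$ that dominates $h$ and satisfies $\mathcal{M}(\mathcal{N}h)\leq 2\|\mathcal{M}\|_{\ell^{p}(w)}\,\mathcal{N}h$. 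By Lemma 1(ii), $w^{1-p'}\in\mathcal{A}_{p'}$, so the dual algorithm $\mathcal{N}'$ built from $\mathcal{M}'h=\mathcal{M}(wh)/w$ plays the same role for seeds in $\ell^{p'}(w)$. I would then take $h=w^{1/p}$ and $g=w^{-1/p}$ as seeds, set $w_1:=\mathcal{N}h$ and $w_2:=\mathcal{N}'g$ (both in $\mathcal{A}_1$), and verify the identity $w=w_1 w_2^{1-p}$ by a balancing/fixed-point argument between the primal and dual iterations, using the duality $[w^{1-p'}]_{\mathcal{A}_{p'}}=[w]_{\mathcal{A}_p}^{p'-1}$ from Lemma 1(ii) to track the two $\mathcal{A}_1$ constants quantitatively in terms of $[w]_{\mathcal{A}_p}$. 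The delicate point is the calibration of the two seeds so that the product $\mathcal{N}h\cdot(\mathcal{N}'g)^{1-p}$ recovers $w$ exactly; this is where the technical work concentrates.
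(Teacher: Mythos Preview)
The paper does not prove this lemma: it is quoted verbatim from \cite{samir} and used as a black box, so there is no ``paper's own proof'' to compare against. Your arguments for part~(3) (Jensen with exponent $(p-1)/(q-1)\in(0,1]$), for part~(2) (two H\"older splittings with exponents $1/\alpha,\,1/(1-\alpha)$), and for the reverse direction of part~(1) are standard and correct.

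There is, however, a genuine gap in your plan for the forward (Jones) direction of~(1). First, a misreading of Lemma~2.3: property~(c) says $(\mathcal{N}'g)\,w\in\mathcal{A}_1$, not $\mathcal{N}'g\in\mathcal{A}_1$; so you cannot take $w_2=\mathcal{N}'g$ as an $\mathcal{A}_1$ weight. Second, even after correcting this and setting $w_1=\mathcal{N}h$ and $v_2=(\mathcal{N}'g)\,w$, running the two Rubio de Francia iterations \emph{independently} on seeds $h=w^{1/p}$ and $g=w^{-1/p}$ gives no mechanism that forces the identity $w=w_1v_2^{1-p}$. The algorithms only guarantee $h\le\mathcal{N}h$ and $g\le\mathcal{N}'g$ together with the $\mathcal{A}_1$ bounds; nothing ties the two outputs together multiplicatively. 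The ``balancing/fixed-point argument'' you allude to is precisely the missing idea: the classical Jones--Rubio de Francia factorization requires a \emph{single coupled} iteration. One builds a sublinear operator of the form
\[
Sf \;=\; \bigl(\mathcal{M}(f^{\,p-1})\bigr)^{1/(p-1)} \;+\; w^{-1}\,\mathcal{M}(fw),
\]
checks that $S$ is bounded on a suitable space (using $w\in\mathcal{A}_p$ and $w^{1-p'}\in\mathcal{A}_{p'}$), and then applies the Rubio de Francia scheme to $S$ to produce a single fixed point $\varphi$ with $S\varphi\le C\varphi$; the two halves of this inequality yield simultaneously $\varphi^{\,p-1}\in\mathcal{A}_1$ and $\varphi w\in\mathcal{A}_1$, from which $w=(\varphi w)(\varphi^{\,p-1})^{1-p}\cdot\varphi^{\,p-1}\cdot\varphi^{-(p-1)}$ rearranges to the desired factorization $w=w_1w_2^{1-p}$ with $w_1=\varphi w$ and $w_2=\varphi^{\,p-1}$. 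Two uncoupled iterations cannot reproduce this simultaneity.
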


The paper is organized as follows: In Section $2,$ we present the basic\
properties of the Rubio de Francia iterated algorithm proved in \cite{saker}%
. Next, we prove the properties of the Dual Rubio de Francia iterated
algorithm by using the result proved in \cite{saker}. In Section $3,$ we
prove some weighted norm inequalities of pairs of sequences $(f,g)$ by
employing the Rubio de Francia iterated algorithms and the reverse
factorization properties of\ Muckenhoupt\ weights $\mathcal{A}_{p}$. We
consider two cases: the first case when $1<p<p_{0}<\infty $ and the second
case when $1<p_{0}<p<\infty ,$ where $p_{0}$ is a fixed number,\ $%
1<p_{0}<\infty .$ The results will be extended to the extrapolation theorem
via $\mathcal{A}_{\infty }$ weights. The proof of the extrapolation theorem
with exact constants will be obtained by combining the two cases and
replacing the pair $(f,g)$ by $(\mathcal{T}f,f).$ The advantage of this
approach is that a number of different results become special cases of the
extrapolation theorem with exact constants. The case when $0<p<1$ will be
considered in future and also left to the interested readers.

\bigskip

The features of the new proof of the discrete Rubio de Francia extrapolation
are the combining of some interesting results: norm inequalities for the
Hardy-Littlewood maximal operator $\mathcal{M}$, duality $\mathcal{M}%
^{^{\prime }}$, and the reverse factorization of $\mathcal{A}_{p}-$weights.
In the proof we also do not assume that the operator $\mathcal{T}$ is linear
or sublinear as posed in \cite{saker}. More precisely, we are talking about
the properties:

$1).$ $\mathcal{M}$ is sublinear, positive and bounded in $\ell _{p}(w)$ if $%
w\in \mathcal{A}_{p}$ (\cite{Saker2020})

$2).$ $\mathcal{M}^{^{\prime }}$ is sublinear, positive and bounded in $\ell
_{p^{^{\prime }}}(w)$ if $w\in \mathcal{A}_{p}$ (will be proved later).

$3).$ If $w_{1},$ $w_{2}\in \mathcal{A}_{1}$ then$w_{1}w_{2}^{1-p}\in 
\mathcal{A}_{p}$ (see Lemma 1.2).

$4).$ $w\in \mathcal{A}_{p}$\ if and only if $w^{1-p^{\prime }}\in \mathcal{A%
}_{p^{\prime }},$ $p^{\prime }=p/\left( p-1\right) $ (see Lemma 1.1)

\section{Preliminaries and Basic Lemmas}

In \cite{Saker2020} the authors proved that the Hardy-Littlewood maximal
operator $\mathcal{M}$ is bounded in $\ell _{p}(w)$ if $w$ is a Muckenhoupt
weight. In \cite{saker} the authors used this result and proved the
following results which will play crucial roles in proving the main results
in Section 3.

\begin{theorem}
\label{s} Assume that $0<\gamma \leq 1$ and $w\in \mathcal{A}_{p}$ for $%
1<p<\infty .$\ For a nonnegative sequence $g,$ we define%
\begin{equation*}
Gg:=\left( \frac{\mathcal{M}\left( g^{1/\gamma }w\right) }{w}\right)
^{\gamma }.
\end{equation*}%
Then the operator $G$ is bounded in $\ell _{p^{\prime }/\gamma }(w),$ $%
p^{\prime }=p/\left( p-1\right) $ and there exists a constant $C>0$ such that%
\begin{equation*}
\left\Vert Gg\right\Vert _{\ell _{p^{\prime }/\gamma }(w)}\leq C\left\Vert
g\right\Vert _{\ell _{p^{\prime }/\gamma }(w)}.
\end{equation*}
\end{theorem}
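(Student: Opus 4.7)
The plan is to reduce the asserted inequality for $G$ directly to the known boundedness of the Hardy--Littlewood maximal operator $\mathcal{M}$ on a suitable weighted $\ell^{p'}$ space, by computing the $\ell^{p'/\gamma}(w)$ norm of $Gg$ raised to the $p'/\gamma$ power and recognizing it as the $\ell^{p'}$ norm (with respect to a different weight) of $\mathcal{M}(g^{1/\gamma}w)$.

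First, I would unpack the definitions. Writing out
\begin{equation*}
\|Gg\|_{\ell^{p'/\gamma}(w)}^{p'/\gamma}=\sum_{k}w(k)\left(\frac{\mathcal{M}(g^{1/\gamma}w)(k)}{w(k)}\right)^{p'}=\sum_{k}w(k)^{1-p'}\bigl(\mathcal{M}(g^{1/\gamma}w)(k)\bigr)^{p'},
\end{equation*}
and setting $v:=w^{1-p'}$, this is precisely $\|\mathcal{M}(g^{1/\gamma}w)\|_{\ell^{p'}(v)}^{p'}$. The key structural point is that by Lemma 1.1(ii), since $w\in\mathcal{A}_{p}$ we have $v=w^{1-p'}\in\mathcal{A}_{p'}$ with explicit norm $[v]_{\mathcal{A}_{p'}}=[w]_{\mathcal{A}_{p}}^{p'-1}$.

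Second, I would invoke the boundedness of the discrete Hardy--Littlewood maximal operator on $\ell^{p'}(v)$ established in \cite{Saker2020}, which yields a constant $C=C([v]_{\mathcal{A}_{p'}})=C([w]_{\mathcal{A}_{p}})$ with
\begin{equation*}
\|\mathcal{M}(g^{1/\gamma}w)\|_{\ell^{p'}(v)}\le C\,\|g^{1/\gamma}w\|_{\ell^{p'}(v)}.
\end{equation*}
The last step is a direct algebraic simplification: since $1-p'+p'=1-p'+p/(p-1)$, the exponent of $w$ in $v(k)\,(g(k)^{1/\gamma}w(k))^{p'}=w(k)^{1-p'+p'}g(k)^{p'/\gamma}=w(k)g(k)^{p'/\gamma}$, so $\|g^{1/\gamma}w\|_{\ell^{p'}(v)}^{p'}=\|g\|_{\ell^{p'/\gamma}(w)}^{p'/\gamma}$. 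Raising the resulting inequality to the $\gamma/p'$ power gives $\|Gg\|_{\ell^{p'/\gamma}(w)}\le C^{\gamma}\,\|g\|_{\ell^{p'/\gamma}(w)}$.

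I do not anticipate any substantive obstacle in this argument: everything reduces to (i) the algebraic identity $v=w^{1-p'}$ together with the duality in Lemma 1.1(ii), and (ii) the already-established $\ell^{p'}(v)$ boundedness of $\mathcal{M}$. The only place that requires mild care is tracking the exponents so that $w(k)^{1-p'}\cdot w(k)^{p'}=w(k)$ cancels exactly, and recovering the dependence of the final constant on $[w]_{\mathcal{A}_{p}}$ through the explicit formula $[w^{1-p'}]_{\mathcal{A}_{p'}}=[w]_{\mathcal{A}_{p}}^{p'-1}$, which will matter later for the explicit extrapolation constants in Section~3.
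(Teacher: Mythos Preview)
Your argument is correct and is exactly the natural route: rewrite $\|Gg\|_{\ell^{p'/\gamma}(w)}^{p'/\gamma}$ as $\|\mathcal{M}(g^{1/\gamma}w)\|_{\ell^{p'}(w^{1-p'})}^{p'}$, invoke the duality $w\in\mathcal{A}_p\Leftrightarrow w^{1-p'}\in\mathcal{A}_{p'}$ from Lemma~1.1(ii), and apply the $\ell^{p'}(w^{1-p'})$ boundedness of $\mathcal{M}$ from \cite{Saker2020}. Note that the present paper does not actually supply its own proof of this theorem---it is quoted from \cite{saker}---so there is no alternative argument in the text to compare with; your proof is precisely the standard one that underlies the cited result.
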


\begin{lemma}
\label{L}(Discrete Rubio de Francia algorithm) Fix $1<p<\infty $ and\ assume
that $w\in \mathcal{A}_{p}.$ For any nonnegative sequence\ $h\in \ell
_{p}(w) $, we define 
\begin{equation*}
\mathcal{N}h(n):=\sum_{s=0}^{\infty }\frac{\mathcal{M}^{s}h(n)}{%
2^{s}\left\Vert \mathcal{M}\right\Vert _{\ell _{p}(w)}^{s}},
\end{equation*}%
where\ for $s>0,$ $\mathcal{M}^{s}h=\mathcal{M\circ }...$ $\mathcal{\circ M}%
h $ denotes $s$ iterations of the Hardy-Littlewood maximal operator and $%
\mathcal{M}^{0}h=h.$ Then

$(i)$ $h(n)\leq \mathcal{N}h(n),$

$(ii)$ $\left\Vert \mathcal{N}h\right\Vert _{\ell _{p}(w)}\leq 2\left\Vert
h\right\Vert _{\ell _{p}(w)},$

$(iii)$ $\mathcal{N}h\in \mathcal{A}_{1}$ and $\left[ \mathcal{N}h\right] _{%
\mathcal{A}_{1}}\leq 2\left\Vert \mathcal{M}\right\Vert _{\ell _{p}(w)}.$
\end{lemma}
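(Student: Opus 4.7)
\medskip

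\noindent\textbf{Proof proposal.} The three assertions are what one should expect from the classical Rubio de Francia construction, so the plan is to mimic that construction in the discrete setting, relying on: (a) the fact that $\mathcal{M}$ is a sublinear, positive operator, and (b) the boundedness of $\mathcal{M}$ on $\ell_{p}(w)$ for $w\in\mathcal{A}_{p}$ proved in \cite{Saker2020}, which gives a finite constant $\Vert\mathcal{M}\Vert_{\ell_{p}(w)}$ that is used as the normalizing factor in the definition of $\mathcal{N}h$.

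For part (i), I would simply isolate the $s=0$ term in the defining series. Because $\mathcal{M}^{0}h=h$ and every iterate $\mathcal{M}^{s}h(n)$ is nonnegative, the partial sums of the series defining $\mathcal{N}h(n)$ form a nondecreasing sequence whose first term already equals $h(n)$, which immediately gives $h(n)\leq \mathcal{N}h(n)$.

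For part (ii), I would invoke Minkowski's inequality for series in $\ell_{p}(w)$, obtaining
\begin{equation*}
\Vert\mathcal{N}h\Vert_{\ell_{p}(w)}\;\leq\;\sum_{s=0}^{\infty}\frac{\Vert\mathcal{M}^{s}h\Vert_{\ell_{p}(w)}}{2^{s}\Vert\mathcal{M}\Vert_{\ell_{p}(w)}^{s}},
\end{equation*}
and then iterate the bound $\Vert\mathcal{M}g\Vert_{\ell_{p}(w)}\leq\Vert\mathcal{M}\Vert_{\ell_{p}(w)}\Vert g\Vert_{\ell_{p}(w)}$ a total of $s$ times, so that the numerator cancels against the $\Vert\mathcal{M}\Vert_{\ell_{p}(w)}^{s}$ in the denominator. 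The remaining geometric series $\sum 2^{-s}=2$ yields the factor $2$ in (ii). The only point requiring care is that Minkowski for countably many nonnegative terms is legitimate here because the right-hand side has just been shown to be finite.

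For part (iii), which is the heart of the lemma, I would apply $\mathcal{M}$ to the series defining $\mathcal{N}h$ and exploit sublinearity and positivity of the maximal operator. Since $\mathcal{M}$ is defined by a supremum of averages of nonnegative quantities, monotone convergence on the partial sums gives
\begin{equation*}
\mathcal{M}(\mathcal{N}h)(n)\;\leq\;\sum_{s=0}^{\infty}\frac{\mathcal{M}^{s+1}h(n)}{2^{s}\Vert\mathcal{M}\Vert_{\ell_{p}(w)}^{s}}.
\end{equation*}
I would then reindex by $t=s+1$ to rewrite the right-hand side as $2\Vert\mathcal{M}\Vert_{\ell_{p}(w)}\sum_{t=1}^{\infty}\mathcal{M}^{t}h(n)/(2^{t}\Vert\mathcal{M}\Vert_{\ell_{p}(w)}^{t})$, which is bounded above by $2\Vert\mathcal{M}\Vert_{\ell_{p}(w)}\mathcal{N}h(n)$ because adding the missing $t=0$ term only increases the sum. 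This yields exactly the pointwise inequality $\mathcal{M}(\mathcal{N}h)(n)\leq 2\Vert\mathcal{M}\Vert_{\ell_{p}(w)}\mathcal{N}h(n)$, which is the defining condition for $\mathcal{N}h\in\mathcal{A}_{1}$ with the stated norm bound. The main technical obstacle is the justification of pulling $\mathcal{M}$ inside the infinite sum: this is where I expect the work to lie, and I would handle it by applying $\mathcal{M}$ to each partial sum, using sublinearity to split, and then passing to the limit using monotonicity of both the partial sums and the supremum defining $\mathcal{M}$.
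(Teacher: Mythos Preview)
Your proposal is correct and follows exactly the standard Rubio de Francia construction. The paper does not actually prove this lemma---it is quoted from \cite{saker}---but the paper's detailed proof of the dual version (Lemma~\ref{L1}) proceeds step by step in precisely the way you outline: isolate the $s=0$ term for (i), apply Minkowski and iterate the operator norm bound for (ii), and use sublinearity plus a shift of index for (iii).
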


Now, we define the dual Discrete Rubio de Francia algorithm and prove the
properties of it.

\begin{lemma}
\label{s1}Assume that $w\in \mathcal{A}_{p},$\ for $1<p<\infty .$\ For a
nonnegative sequence $h,$ we define a general\ Hardy-Littlewood maximal\
operator by 
\begin{equation*}
\mathcal{M}^{\prime }h:=\frac{\mathcal{M}\left( wh\right) }{w}.
\end{equation*}%
Then the operator $\mathcal{M}^{\prime }$\ is bounded in $\ell _{p^{\prime
}}(w),$ $p^{\prime }=p/\left( p-1\right) $ and there exists a constant $C>0$
such that%
\begin{equation}
\left\Vert \mathcal{M}^{\prime }h\right\Vert _{\ell _{p^{\prime }}(w)}\leq
C\left\Vert h\right\Vert _{\ell _{p^{\prime }}(w)}.  \label{f}
\end{equation}
\end{lemma}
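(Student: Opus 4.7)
The plan is to obtain Lemma \ref{s1} as an immediate specialization of Theorem \ref{s} proved just above. Setting $\gamma = 1$ in the definition of the operator $G$ in Theorem \ref{s} gives
\[
Gg = \left(\frac{\mathcal{M}(g^{1/1}\,w)}{w}\right)^{1} = \frac{\mathcal{M}(gw)}{w} = \mathcal{M}'g,
\]
and the boundedness statement $\|Gg\|_{\ell_{p'/\gamma}(w)} \le C\|g\|_{\ell_{p'/\gamma}(w)}$ becomes exactly the desired inequality (\ref{f}). So the first and cleanest approach would be this one-line reduction.

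As a self-contained alternative I would argue by the $\mathcal{A}_p$--$\mathcal{A}_{p'}$ duality of Muckenhoupt weights. Unfolding the norm gives
\[
\|\mathcal{M}'h\|_{\ell_{p'}(w)}^{p'} = \sum_n \left|\frac{\mathcal{M}(wh)(n)}{w(n)}\right|^{p'} w(n) = \sum_n |\mathcal{M}(wh)(n)|^{p'}\, w(n)^{1-p'}.
\]
Introducing $f := wh$ and $\sigma := w^{1-p'}$, a short computation verifies the identities $\|h\|_{\ell_{p'}(w)}^{p'} = \|f\|_{\ell_{p'}(\sigma)}^{p'}$ and $\|\mathcal{M}'h\|_{\ell_{p'}(w)}^{p'} = \|\mathcal{M}f\|_{\ell_{p'}(\sigma)}^{p'}$. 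By Lemma 1.1(ii), $\sigma \in \mathcal{A}_{p'}$ with $[\sigma]_{\mathcal{A}_{p'}} = [w]_{\mathcal{A}_p}^{p'-1}$, so the discrete Muckenhoupt boundedness theorem recalled from \cite{Saker2020} applied to the ordinary maximal operator $\mathcal{M}$ on $\ell_{p'}(\sigma)$ delivers (\ref{f}) at once, with an explicit constant depending only on $[w]_{\mathcal{A}_p}$ and $p$.

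The main (and essentially only) obstacle is the algebraic bookkeeping of the exponents $1-p'$ and $p'$ when passing between the $w$-weighted and $\sigma$-weighted norms in the duality proof; in the first approach nothing beyond invoking Theorem \ref{s} is needed. Either way, Lemma \ref{s1} is a direct corollary of the already-established boundedness of the classical Hardy--Littlewood maximal operator on $\mathcal{A}_q$-weighted $\ell^{q}$-spaces, and the resulting constant $C$ can be tracked explicitly from $[w]_{\mathcal{A}_p}$.
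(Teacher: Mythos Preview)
Your first approach---specializing Theorem \ref{s} with $\gamma=1$ so that $G=\mathcal{M}'$ and $p'/\gamma=p'$---is precisely the paper's proof, essentially verbatim. Your alternative duality argument via $\sigma=w^{1-p'}\in\mathcal{A}_{p'}$ is also correct and is in fact the mechanism underlying Theorem \ref{s} itself, so nothing further is needed.
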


\begin{proof}
Applying Theorem \ref{s} with $g=h$ and $\gamma =1,$ we see that the
operator $\mathcal{M}^{\prime }$ which is defined by 
\begin{equation*}
\mathcal{M}^{\prime }h:=\frac{\mathcal{M}\left( wh\right) }{w},
\end{equation*}%
is bounded in $\ell _{^{p^{\prime }}}(w)$ and satisfies that%
\begin{equation*}
\left\Vert \mathcal{M}^{\prime }h\right\Vert _{\ell _{p^{\prime }}(w)}\leq
C\left\Vert h\right\Vert _{\ell _{p^{\prime }}(w)},
\end{equation*}%
which is the desired inequality (\ref{f}). The proof is complete.
\end{proof}

\begin{lemma}
\label{L1}(Discrete Dual Rubio de Francia algorithm) Fix $1<p<\infty $\ and\ 
$w\in \mathcal{A}_{p}.$ For any nonnegative sequence\ $h\in \ell _{p^{\prime
}}(w),$ $p^{\prime }=p/(p-1),$\ we define 
\begin{equation}
\mathcal{N}^{\prime }h(n):=\sum_{s=0}^{\infty }\frac{\left( \mathcal{M}%
^{\prime }\right) ^{s}h(n)}{2^{s}\left\Vert \mathcal{M}^{\prime }\right\Vert
_{\ell _{p^{\prime }}(w)}^{s}},  \label{r}
\end{equation}%
with%
\begin{equation}
\mathcal{M}^{\prime }h(n)=\frac{\mathcal{M}\left( wh\right) (n)}{w(n)},\text{
\ }n\in \mathbb{Z}_{+}\text{,}  \label{r1}
\end{equation}%
where\ $\mathcal{M}h$ is the Hardy-Littlewood maximal operator and $\left( 
\mathcal{M}^{\prime }\right) ^{s}h=\mathcal{M}^{\prime }\mathcal{\circ }...$ 
$\mathcal{\circ M}^{\prime }h$ denotes $s$ iterations of the operator\ $%
\mathcal{M}^{\prime },$\ and\ $\left( \mathcal{M}^{\prime }\right) ^{0}h=h.$%
\ Then

$\,(a).$ $h(n)\leq \mathcal{N}^{\prime }h(n),$

$(b).$ $\left\Vert \mathcal{N}^{\prime }h\right\Vert _{\ell _{p^{\prime
}}(w)}\leq 2\left\Vert h\right\Vert _{\ell _{p^{\prime }}(w)},$

$(c).$ $\left( \mathcal{N}^{\prime }h\right) w\in \mathcal{A}_{1}$ and $%
\left[ \left( \mathcal{N}^{\prime }h\right) w\right] _{\mathcal{A}_{1}}\leq
2\left\Vert \mathcal{M}^{\prime }\right\Vert _{\ell _{p^{\prime }}(w)}.$
\end{lemma}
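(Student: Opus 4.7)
The plan is to verify the three properties (a), (b), (c) using the boundedness of $\mathcal{M}'$ on $\ell_{p'}(w)$ established in Lemma \ref{s1}, in direct analogy with how Lemma \ref{L} is proved for $\mathcal{N}$ in \cite{saker}. The whole strategy rests on the fact that the Neumann-type geometric series defining $\mathcal{N}'$ converges in $\ell_{p'}(w)$ (since the denominator contains the operator norm to the $s$-th power, beating the numerator), and that $\mathcal{N}'$ satisfies a crucial self-improvement identity under $\mathcal{M}'$.

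For part $(a)$, I would simply isolate the $s=0$ summand in the definition of $\mathcal{N}'h(n)$: since $(\mathcal{M}')^{0}h=h$ and every other term in the series is nonnegative, one gets $h(n)\le \mathcal{N}'h(n)$ immediately. For part $(b)$, I would apply Minkowski's inequality in $\ell_{p'}(w)$ term-by-term to the series defining $\mathcal{N}'h$, and then use Lemma \ref{s1} iteratively to bound $\|(\mathcal{M}')^{s}h\|_{\ell_{p'}(w)}\le \|\mathcal{M}'\|_{\ell_{p'}(w)}^{s}\|h\|_{\ell_{p'}(w)}$. The factors of $\|\mathcal{M}'\|_{\ell_{p'}(w)}^{s}$ cancel against the denominator, leaving the geometric series $\sum_{s\ge 0}2^{-s}=2$.

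The substantive step is part $(c)$. The claim is that $(\mathcal{N}'h)w$ is an $\mathcal{A}_{1}$ weight with norm at most $2\|\mathcal{M}'\|_{\ell_{p'}(w)}$, which by definition of $\mathcal{A}_{1}$ requires showing $\mathcal{M}((\mathcal{N}'h)w)(n)\le 2\|\mathcal{M}'\|_{\ell_{p'}(w)}(\mathcal{N}'h)(n)w(n)$ for all $n$. The idea is to push $\mathcal{M}$ inside the sum using the sublinearity (subadditivity) of the Hardy--Littlewood maximal operator applied to the nonnegative terms, giving
\begin{equation*}
\mathcal{M}((\mathcal{N}'h)w)(n)\le \sum_{s=0}^{\infty}\frac{\mathcal{M}\bigl((\mathcal{M}')^{s}h\cdot w\bigr)(n)}{2^{s}\|\mathcal{M}'\|_{\ell_{p'}(w)}^{s}}.
\end{equation*}
Then I would use the key identity $\mathcal{M}((\mathcal{M}')^{s}h\cdot w)=w\cdot(\mathcal{M}')^{s+1}h$, which is nothing more than the definition (\ref{r1}) of $\mathcal{M}'$ rewritten. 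After reindexing $t=s+1$ and factoring out $2\|\mathcal{M}'\|_{\ell_{p'}(w)}$, the remaining series is exactly $w(n)$ times a tail of $\mathcal{N}'h(n)$, hence bounded by $w(n)\mathcal{N}'h(n)$, yielding the desired $\mathcal{A}_{1}$ bound.

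The only delicate point (and thus the main obstacle, though mild) is justifying the pointwise sublinearity step $\mathcal{M}(\sum_{s}a_{s})\le\sum_{s}\mathcal{M}(a_{s})$ for the infinite nonnegative series; this follows by monotone convergence for the partial sums together with the finite subadditivity $\mathcal{M}(f+g)\le \mathcal{M}f+\mathcal{M}g$, and one should first note that the series defining $\mathcal{N}'h$ converges pointwise a.e.\ (in fact pointwise on $\mathbb{Z}_{+}$) as a consequence of part $(b)$, so that all manipulations are legitimate. Once these identities are in place, the three conclusions read off directly and yield explicit constants as stated.
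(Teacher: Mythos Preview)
Your proposal is correct and follows essentially the same approach as the paper's proof: part $(a)$ by isolating the $s=0$ term, part $(b)$ by Minkowski plus the iterated bound from Lemma~\ref{s1}, and part $(c)$ by sublinearity, reindexing, and the defining relation $\mathcal{M}(fw)=w\,\mathcal{M}'f$. The only cosmetic difference is that in $(c)$ the paper first bounds $\mathcal{M}'(\mathcal{N}'h)$ using sublinearity of $\mathcal{M}'$ and then multiplies through by $w$ via (\ref{r1}), whereas you apply $\mathcal{M}$ directly to $(\mathcal{N}'h)w$ and invoke (\ref{r1}) term-by-term; the computations are identical.
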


\begin{proof}
From (\ref{r}), we\ have (where $\left( \mathcal{M}^{\prime }\right) ^{0}h=h$%
) that%
\begin{equation*}
\mathcal{N}^{\prime }h(n)=h(n)+\sum_{s=1}^{\infty }\frac{\left( \mathcal{M}%
^{\prime }\right) ^{s}h(n)}{2^{s}\left\Vert \mathcal{M}^{\prime }\right\Vert
_{\ell _{p^{\prime }}(w)}^{s}},
\end{equation*}%
and then%
\begin{equation*}
h(n)\leq \mathcal{N}^{\prime }h(n),
\end{equation*}%
which proves the property $(a).$ Applying Minkowski's inequality, on the
term $\left\Vert \mathcal{N}^{\prime }h\right\Vert _{\ell _{p^{\prime
}}(w)}, $ we see that%
\begin{eqnarray}
\left\Vert \mathcal{N}^{\prime }h\right\Vert _{\ell _{p^{\prime }}(w)}
&=&\left( \sum_{n=1}^{\infty }w(n)\left( \mathcal{N}^{\prime }h(n)\right)
^{p^{\prime }}\right) ^{\frac{1}{p^{\prime }}}  \notag \\
&=&\left( \sum_{n=1}^{\infty }w(n)\left( \sum_{s=0}^{\infty }\frac{\left( 
\mathcal{M}^{\prime }\right) ^{s}h(n)}{2^{s}\left\Vert \mathcal{M}^{\prime
}\right\Vert _{\ell _{p^{\prime }}(w)}^{s}}\right) ^{p^{\prime }}\right) ^{%
\frac{1}{p^{\prime }}}  \notag \\
&\leq &\sum_{s=0}^{\infty }\frac{1}{2^{s}\left\Vert \mathcal{M}^{\prime
}\right\Vert _{\ell _{p^{\prime }}(w)}^{s}}\left( \sum_{n=1}^{\infty
}w(n)\left( \left( \mathcal{M}^{\prime }\right) ^{s}h(n)\right) ^{p^{\prime
}}\right) ^{\frac{1}{p^{\prime }}}  \notag \\
&=&\sum_{s=0}^{\infty }\frac{1}{2^{s}\left\Vert \mathcal{M}^{\prime
}\right\Vert _{\ell _{p^{\prime }}(w)}^{s}}\left\Vert \left( \mathcal{M}%
^{\prime }\right) ^{s}h\right\Vert _{\ell _{p^{\prime }}(w)}.  \label{d}
\end{eqnarray}%
From Lemma \ref{s1},\ since the operator $\mathcal{M}^{\prime }$ is bounded,
the inequality (\ref{d}) becomes 
\begin{equation*}
\left\Vert \mathcal{N}^{\prime }h\right\Vert _{\ell _{p^{\prime }}(w)}\leq
\sum_{s=0}^{\infty }\frac{1}{2^{s}}\left\Vert h\right\Vert _{\ell
_{p^{\prime }}(w)}=2\left\Vert h\right\Vert _{\ell _{p^{\prime }}(w)},
\end{equation*}%
which proves the property $(b).$ Since the operator $h\rightarrow \mathcal{M}%
^{\prime }h$\ is sublinear, then\ 
\begin{eqnarray*}
\mathcal{M}^{\prime }\left( \mathcal{N}^{\prime }h(n)\right) &=&\mathcal{M}%
^{\prime }\left( \sum_{s=0}^{\infty }\left( \frac{\left( \mathcal{M}^{\prime
}\right) ^{s}h(n)}{2^{s}\left\Vert \mathcal{M}^{\prime }\right\Vert _{\ell
_{p^{\prime }}(w)}^{s}}\right) \right) \\
&\leq &\sum_{s=0}^{\infty }\mathcal{M}^{\prime }\left( \frac{\left( \mathcal{%
M}^{\prime }\right) ^{s}h(n)}{2^{s}\left\Vert \mathcal{M}^{\prime
}\right\Vert _{\ell _{p^{\prime }}(w)}^{s}}\right) =\sum_{s=0}^{\infty }%
\frac{1}{2^{s}\left\Vert \mathcal{M}^{\prime }\right\Vert _{\ell _{p^{\prime
}}(w)}^{s}}\left( \mathcal{M}^{\prime }\right) ^{s+1}h(n) \\
&\leq &2\left\Vert \mathcal{M}^{\prime }\right\Vert _{\ell _{p^{\prime
}}(w)}\sum_{s=0}^{\infty }\frac{\left( \mathcal{M}^{\prime }\right) ^{s}h(n)%
}{2^{s}\left\Vert \mathcal{M}^{\prime }\right\Vert _{\ell _{p^{\prime
}}(w)}^{s}}\leq 2\left\Vert \mathcal{M}^{\prime }\right\Vert _{\ell
_{p^{\prime }}(w)}\mathcal{N}^{\prime }h(n),
\end{eqnarray*}%
and then by using (\ref{r1}), we have that%
\begin{equation*}
\mathcal{M}\left( w\mathcal{N}^{\prime }h\right) (n)\leq 2\left\Vert 
\mathcal{M}^{\prime }\right\Vert _{\ell _{p^{\prime }}(w)}\left( w\mathcal{N}%
^{\prime }h\right) (n).
\end{equation*}%
This proves that $w\mathcal{N}^{\prime }h\in \mathcal{A}_{1}$ with the norm 
\begin{equation*}
\left[ \left( \mathcal{N}^{\prime }h\right) w\right] _{\mathcal{A}_{1}}\leq
2\left\Vert \mathcal{M}^{\prime }\right\Vert _{\ell _{p^{\prime }}(w)},
\end{equation*}%
which proves the property $(c).$ The proof is complete.
\end{proof}

\begin{lemma}
\label{L*}Assume that\ $w,$\ $h$\ are\ positive weights$\ $and $%
1<p<p_{0}<\infty .$ If $w\in \mathcal{A}_{p},$ then\ $w\left( \mathcal{N}%
h\right) ^{-\left( p_{0}-p\right) }\in \mathcal{A}_{p_{0}}$ and 
\begin{equation}
\left[ w\mathcal{N}h^{-\left( p_{0}-p\right) }\right] _{\mathcal{A}%
_{p_{0}}}\leq \left( \left[ \mathcal{N}h\right] _{\mathcal{A}_{1}}\right)
^{p_{0}-p}\left[ w\right] _{\mathcal{A}_{p}},  \notag
\end{equation}%
where the operator $\mathcal{N}$ is defined as in Lemma \ref{L}.
\end{lemma}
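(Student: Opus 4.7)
Set $v := \mathcal{N}h$ and $u := w\,v^{-(p_0-p)}$. By Lemma \ref{L}(iii), $v\in \mathcal{A}_1$ with an explicit control on $[v]_{\mathcal{A}_1}$, so the task is to verify the $\mathcal{A}_{p_0}$ condition for $u$ directly from the definition and keep track of the constants. Writing out the average form,
\[
[u]_{\mathcal{A}_{p_0}} = \sup_{n}\left(\frac{1}{n}\sum_{k=1}^{n} w(k)\,v(k)^{-(p_0-p)}\right)\left(\frac{1}{n}\sum_{k=1}^{n} w(k)^{-\frac{1}{p_0-1}}\,v(k)^{\frac{p_0-p}{p_0-1}}\right)^{p_0-1}.
\]
The second average couples $w$ and $v$, so my first step is to split it by H\"older's inequality with the conjugate exponents $\alpha = (p_0-1)/(p-1)$ and $\alpha' = (p_0-1)/(p_0-p)$. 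These exponents are forced on us: they are exactly what is needed so that $w^{-1/(p_0-1)}$ is raised to the power producing $w^{-1/(p-1)}$ (the factor appearing in the $\mathcal{A}_p$ condition) while $v^{(p_0-p)/(p_0-1)}$ is raised to the first power of $v$.

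After H\"older and raising the outer bracket to the $(p_0-1)$-th power, the $\mathcal{A}_{p_0}$ quantity is dominated by
\[
\sup_n\left(\frac{1}{n}\sum_{k=1}^n w(k)v(k)^{-(p_0-p)}\right)\left(\frac{1}{n}\sum_{k=1}^n w(k)^{-\frac{1}{p-1}}\right)^{p-1}\left(\frac{1}{n}\sum_{k=1}^n v(k)\right)^{p_0-p}.
\]
The middle factor is precisely the $w$-part of the $\mathcal{A}_p$ norm. The remaining task is to absorb the third factor into the first using the $\mathcal{A}_1$ property of $v$. For every $k\in [1,n]$ one has $\frac{1}{n}\sum_{j=1}^n v(j)\le \mathcal{M}v(k)\le [v]_{\mathcal{A}_1}\,v(k)$; substituting this pointwise under the first average cancels $v(k)^{-(p_0-p)}$ against the $v(k)^{p_0-p}$ coming out of the third factor and leaves only the constant $[v]_{\mathcal{A}_1}^{p_0-p}$. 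What remains inside is $\frac{1}{n}\sum_{k=1}^n w(k)$, which combines with the middle factor to reconstruct $[w]_{\mathcal{A}_p}$.

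Combining these estimates yields $[u]_{\mathcal{A}_{p_0}}\le [v]_{\mathcal{A}_1}^{p_0-p}[w]_{\mathcal{A}_p}$, which is the claim. The main obstacle is not computational but conceptual: one has to be sure that the discrete $\mathcal{A}_1$ condition for $v$ delivers the average bound $\frac{1}{n}\sum_{j=1}^n v(j)\le [v]_{\mathcal{A}_1}v(k)$ at \emph{every} index $k\in[1,n]$, not merely at the right endpoint $k=n$. This is exactly the pointwise-vs-average feature of the discrete Hardy-Littlewood maximal operator used throughout the paper, and it is what legitimises pulling the third factor inside the first sum. Once the H\"older pair $\bigl((p_0-1)/(p-1),(p_0-1)/(p_0-p)\bigr)$ is chosen, the remainder of the argument is routine bookkeeping.
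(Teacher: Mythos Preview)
Your proof is correct and follows essentially the same route as the paper: the paper applies the $\mathcal{A}_1$ bound $v(k)^{-(p_0-p)}\le [v]_{\mathcal{A}_1}^{p_0-p}\bigl(\tfrac{1}{n}\sum v\bigr)^{-(p_0-p)}$ to the first average, then uses H\"older with the identical exponents $\gamma=(p_0-1)/(p-1)$, $\nu=(p_0-1)/(p_0-p)$ on the second average so that the factors $\bigl(\tfrac{1}{n}\sum v\bigr)^{\pm(p_0-p)}$ cancel. The only difference is the order in which you present the two steps; the underlying ideas and constants match exactly.
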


\begin{proof}
From\ the property\ $(iii)$\ of Lemma \ref{L},\ we see that $\mathcal{N}h\in 
\mathcal{A}_{1},$ and then 
\begin{equation*}
\frac{1}{n}\sum\limits_{k=1}^{n}\mathcal{N}h(k)\leq \left[ \mathcal{N}h%
\right] _{\mathcal{A}_{1}}\mathcal{N}h(k),
\end{equation*}%
and thus we have for $p_{0}>p$\ that%
\begin{eqnarray}
&&\frac{1}{n}\sum\limits_{k=1}^{n}w(k)\mathcal{N}h^{-\left( p_{0}-p\right)
}(k)  \notag \\
&\leq &\frac{1}{n}\sum\limits_{k=1}^{n}w(k)\left( \frac{1}{n}%
\sum\limits_{k=1}^{n}\mathcal{N}h(k)\right) ^{-\left( p_{0}-p\right) }\left( %
\left[ \mathcal{N}h\right] _{\mathcal{A}_{1}}\right) ^{p_{0}-p}  \notag \\
&=&\left( \left[ \mathcal{N}h\right] _{\mathcal{A}_{1}}\right)
^{p_{0}-p}\left( \frac{1}{n}\sum\limits_{k=1}^{n}w(k)\right) \left( \frac{1}{%
n}\sum\limits_{k=1}^{n}\mathcal{N}h(k)\right) ^{-\left( p_{0}-p\right) }.
\label{b}
\end{eqnarray}%
Applying H\"{o}lder's inequality on the term%
\begin{equation*}
\left( \frac{1}{n}\sum\limits_{k=1}^{n}\left[ w(k)\left( \mathcal{N}h\right)
^{-\left( p_{0}-p\right) }(k)\right] ^{\frac{-1}{p_{0}-1}}\right) ^{p_{0}-1},
\end{equation*}%
with $\gamma =\left( p_{0}-1\right) /\left( p-1\right) >1$ and $\nu =\left(
p_{0}-1\right) /\left( p_{0}-p\right) $ ($1/\gamma +1/\nu =1$), we get that 
\begin{eqnarray}
&&\left( \frac{1}{n}\sum\limits_{k=1}^{n}\left[ w(k)\left( \mathcal{N}%
h\right) ^{-\left( p_{0}-p\right) }(k)\right] ^{\frac{-1}{p_{0}-1}}\right)
^{p_{0}-1}  \notag \\
&=&\left( \frac{1}{n}\right) ^{p_{0}-1}\left( \sum\limits_{k=1}^{n}w^{\frac{%
-1}{p_{0}-1}}(k)\left( \mathcal{N}h\right) ^{\frac{p_{0}-p}{p_{0}-1}%
}(k)\right) ^{p_{0}-1}  \notag \\
&\leq &\left( \frac{1}{n}\right) ^{p_{0}-1}\left( \sum\limits_{k=1}^{n}%
\mathcal{N}h(k)\right) ^{p_{0}-p}\left( \sum\limits_{k=1}^{n}w^{\frac{-1}{p-1%
}}(k)\right) ^{p-1}  \notag \\
&=&\left( \frac{1}{n}\sum\limits_{k=1}^{n}\mathcal{N}h(k)\right)
^{p_{0}-p}\left( \frac{1}{n}\sum\limits_{k=1}^{n}w^{\frac{-1}{p-1}%
}(k)\right) ^{p-1}.  \label{b1}
\end{eqnarray}%
From (\ref{b}) and (\ref{b1}), since $w\in \mathcal{A}_{p},$ $1<p<\infty ,$
we have that 
\begin{eqnarray*}
&&\left( \frac{1}{n}\sum\limits_{k=1}^{n}w(k)\left( \mathcal{N}h\right)
^{-\left( p_{0}-p\right) }(k)\right) \left( \frac{1}{n}\sum\limits_{k=1}^{n}%
\left[ w(k)\left( \mathcal{N}h\right) ^{-\left( p_{0}-p\right) }(k)\right] ^{%
\frac{-1}{p_{0}-1}}\right) ^{p_{0}-1} \\
&\leq &\left( \left[ \mathcal{N}h\right] _{\mathcal{A}_{1}}\right)
^{p_{0}-p}\left( \frac{1}{n}\sum\limits_{k=1}^{n}w(k)\right) \left( \frac{1}{%
n}\sum\limits_{k=1}^{n}\mathcal{N}h(k)\right) ^{-\left( p_{0}-p\right) } \\
&&\times \left( \frac{1}{n}\sum\limits_{k=1}^{n}\mathcal{N}h(k)\right)
^{p_{0}-p}\left( \frac{1}{n}\sum\limits_{k=1}^{n}w^{\frac{-1}{p-1}%
}(k)\right) ^{p-1} \\
&=&\left( \left[ \mathcal{N}h\right] _{\mathcal{A}_{1}}\right)
^{p_{0}-p}\left( \frac{1}{n}\sum\limits_{k=1}^{n}w(k)\right) \left( \frac{1}{%
n}\sum\limits_{k=1}^{n}w^{\frac{-1}{p-1}}(k)\right) ^{p-1}.
\end{eqnarray*}%
Taking supremum, we have that $w\left( \mathcal{N}h\right) ^{-\left(
p_{0}-p\right) }\in \mathcal{A}_{p_{0}}$ with the constant 
\begin{eqnarray*}
\left[ w\left( \mathcal{N}h\right) ^{-\left( p_{0}-p\right) }\right] _{%
\mathcal{A}_{p_{0}}} &=&\sup_{n}\left( \frac{1}{n}\sum\limits_{k=1}^{n}w(k)%
\left( \mathcal{N}h\right) ^{-\left( p_{0}-p\right) }(k)\right) \\
&&\times \left( \frac{1}{n}\sum\limits_{k=1}^{n}\left[ w(k)\left( \mathcal{N}%
h\right) ^{-\left( p_{0}-p\right) }(k)\right] ^{\frac{-1}{p_{0}-1}}\right)
^{p_{0}-1} \\
&\leq &\left( \left[ \mathcal{N}h\right] _{\mathcal{A}_{1}}\right) ^{p_{0}-p}%
\left[ w\right] _{\mathcal{A}_{p}},
\end{eqnarray*}

which is the desired inequality. The proof is complete.
\end{proof}

\begin{lemma}
\label{L1*}Assume that\ $w,$ $h$ are positive sequences$\ $on $\mathbb{Z}%
_{+}\ $and $1<p_{0}<p<\infty .$\ If\ $w\in \mathcal{A}_{p},$\ then $w\left( 
\mathcal{N}^{\prime }h\right) ^{\frac{p-p_{0}}{p-1}}\in \mathcal{A}_{p_{0}}$
and%
\begin{equation}
\left[ w\left( \mathcal{N}^{\prime }h\right) ^{\frac{p-p_{0}}{p-1}}\right] _{%
\mathcal{A}_{p_{0}}}\leq \left[ \left( \mathcal{N}^{\prime }h\right) w\right]
_{\mathcal{A}_{1}}^{\frac{p-p_{0}}{p-1}}\left[ w\right] _{\mathcal{A}_{p}}^{%
\frac{p_{0}-1}{p-1}},  \notag
\end{equation}%
where the operator $\mathcal{N}^{\prime }$\ is defined as in Lemma \ref{L1}.
\end{lemma}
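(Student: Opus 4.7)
The plan is to imitate the proof of Lemma \ref{L*}, but with the $\mathcal{A}_{1}$ ingredient replaced by the fact that $(\mathcal{N}^{\prime }h)w\in \mathcal{A}_{1}$, supplied by Lemma \ref{L1}(c). The bookkeeping will differ because the $\mathcal{A}_{1}$ weight is now the \emph{product} $(\mathcal{N}^{\prime }h)w$ rather than $\mathcal{N}^{\prime }h$ alone, which forces me to rewrite $\mathcal{N}^{\prime }h=((\mathcal{N}^{\prime }h)w)/w$ throughout. I set
\[
\alpha :=\frac{p-p_{0}}{p-1},\qquad 1-\alpha =\frac{p_{0}-1}{p-1},\qquad W(k):=w(k)(\mathcal{N}^{\prime }h)(k)^{\alpha },
\]
and aim to bound, uniformly in $n$, the $\mathcal{A}_{p_{0}}$ product of $W$ by $\left[ (\mathcal{N}^{\prime }h)w\right] _{\mathcal{A}_{1}}^{\alpha }\left[ w\right] _{\mathcal{A}_{p}}^{(p_{0}-1)/(p-1)}$.

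For the first average I write $W=w^{1-\alpha }\bigl((\mathcal{N}^{\prime }h)w\bigr)^{\alpha }$ and apply H\"{o}lder's inequality with conjugate exponents $1/(1-\alpha )$ and $1/\alpha $ to obtain
\[
\tfrac{1}{n}\sum_{k=1}^{n}W(k)\le \Bigl(\tfrac{1}{n}\sum\nolimits_{k=1}^{n} w(k)\Bigr)^{1-\alpha }\Bigl(\tfrac{1}{n}\sum\nolimits_{k=1}^{n} (\mathcal{N}^{\prime }h)(k)w(k)\Bigr)^{\alpha }.
\]
For the second average I first verify the algebraic identity $(\alpha -1)/(p_{0}-1)=-1/(p-1)$. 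Substituting $\mathcal{N}^{\prime }h=((\mathcal{N}^{\prime }h)w)/w$ into $W(k)^{-1/(p_{0}-1)}$, this identity collapses the $w$-exponent to $-1/(p-1)$, leaving $W(k)^{-1/(p_{0}-1)}=w(k)^{-1/(p-1)}\bigl((\mathcal{N}^{\prime }h)(k)w(k)\bigr)^{-\alpha /(p_{0}-1)}$. Now I invoke Lemma \ref{L1}(c) pointwise: from $(\mathcal{N}^{\prime }h)w\in \mathcal{A}_{1}$ I obtain $(\mathcal{N}^{\prime }h)(k)w(k)\ge \left[ (\mathcal{N}^{\prime }h)w\right] _{\mathcal{A}_{1}}^{-1}\tfrac{1}{n}\sum_{j=1}^{n}(\mathcal{N}^{\prime }h)(j)w(j)$ for every $k\le n$; raising to the negative power $-\alpha /(p_{0}-1)$ flips the inequality and produces a $k$-independent upper bound that can be pulled outside the sum.

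Multiplying the bound for $\tfrac{1}{n}\sum W$ with the $(p_{0}-1)$-th power of the bound for $\tfrac{1}{n}\sum W^{-1/(p_{0}-1)}$, the two factors $\bigl(\tfrac{1}{n}\sum (\mathcal{N}^{\prime }h)w\bigr)^{\pm \alpha }$ cancel exactly, and the surviving $w$-averages combine into
\[
\Bigl(\tfrac{1}{n}\sum\nolimits_{k=1}^{n} w(k)\Bigr)^{(p_{0}-1)/(p-1)}\Bigl(\tfrac{1}{n}\sum\nolimits_{k=1}^{n} w^{-1/(p-1)}(k)\Bigr)^{p_{0}-1}=\Bigl[\Bigl(\tfrac{1}{n}\sum\nolimits_{k=1}^{n} w(k)\Bigr)\Bigl(\tfrac{1}{n}\sum\nolimits_{k=1}^{n} w^{-1/(p-1)}(k)\Bigr)^{p-1}\Bigr]^{(p_{0}-1)/(p-1)},
\]
which is bounded by $[w]_{\mathcal{A}_{p}}^{(p_{0}-1)/(p-1)}$. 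Taking the supremum over $n$ yields the claimed norm inequality. The whole argument is purely algebraic once Lemma \ref{L1}(c) is in hand; the main obstacle is guessing the correct splitting $W=w^{1-\alpha }((\mathcal{N}^{\prime }h)w)^{\alpha }$ together with the correct pair of H\"{o}lder exponents, so that both the $(\mathcal{N}^{\prime }h)w$ averages cancel out and the $w$ averages reconstruct an exact power of the $\mathcal{A}_{p}$ product for $w$.
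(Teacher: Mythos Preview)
Your proof is correct and follows essentially the same approach as the paper's: both estimate the first average via H\"older with exponents $\frac{p-1}{p_{0}-1}$ and $\frac{p-1}{p-p_{0}}$ (your $1/(1-\alpha)$ and $1/\alpha$), rewrite the second average by factoring out $(\mathcal{N}'h)w$ and using Lemma~\ref{L1}(c) pointwise, and then multiply so that the $(\mathcal{N}'h)w$ averages cancel and a power of the $\mathcal{A}_{p}$ product for $w$ remains. The only cosmetic difference is your convenient abbreviation $\alpha=(p-p_{0})/(p-1)$ and the phrasing of the first H\"older step as an unweighted inequality on $w^{1-\alpha}((\mathcal{N}'h)w)^{\alpha}$ rather than a $w$-weighted inequality on $1\cdot(\mathcal{N}'h)^{\alpha}$; these are algebraically identical.
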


\begin{proof}
Since $p>p_{0}>1,$ $p_{0}-1>0,$ $p-p_{0}>0,$\ we have that 
\begin{equation}
\frac{p-1}{p-p_{0}}=\frac{\left( p-p_{0}\right) +\left( p_{0}-1\right) }{%
p-p_{0}}=1+\frac{p_{0}-1}{p-p_{0}}>1.  \label{c16}
\end{equation}%
Then by applying the weighted H\"{o}lder inequality 
\begin{equation}
\sum\limits_{k=1}^{n}w(k)f(k)g(k)\leq \left[ \sum\limits_{k=1}^{n}w(k)f^{%
\gamma }(k)\right] ^{\frac{1}{\gamma }}\left[ \sum\limits_{k=1}^{n}w(k)g^{%
\nu }(k)\right] ^{\frac{1}{\nu }},  \label{wh}
\end{equation}%
on the term 
\begin{equation*}
\sum\limits_{k=1}^{n}w(k)\left( \mathcal{N}^{\prime }h\right) ^{\frac{p-p_{0}%
}{p-1}}(k),
\end{equation*}%
with indices $\gamma =\left( p-1\right) /\left( p-p_{0}\right) >1$ and $\nu
=\left( p-1\right) /\left( p_{0}-1\right) $ (where $1/\gamma +1/\nu =1$), we
see that%
\begin{eqnarray*}
&&\sum\limits_{k=1}^{n}w(k)\left( \mathcal{N}^{\prime }h\right) ^{\frac{%
p-p_{0}}{p-1}}(k)=\sum\limits_{k=1}^{n}w(k).1.\left( \mathcal{N}^{\prime
}h\right) ^{\frac{p-p_{0}}{p-1}}(k) \\
&\leq &\left( \sum\limits_{k=1}^{n}w(k)\right) ^{\frac{p_{0}-1}{p-1}}\left(
\sum\limits_{k=1}^{n}w(k)\mathcal{N}^{\prime }h(k)\right) ^{\frac{p-p_{0}}{%
p-1}},
\end{eqnarray*}%
and then 
\begin{eqnarray}
&&\frac{1}{n}\sum\limits_{k=1}^{n}w(k)\left( \mathcal{N}^{\prime }h\right) ^{%
\frac{p-p_{0}}{p-1}}(k)  \notag \\
&\leq &\left( \frac{1}{n}\sum\limits_{k=1}^{n}w(k)\right) ^{\frac{p_{0}-1}{%
p-1}}\left( \frac{1}{n}\sum\limits_{k=1}^{n}w(k)\mathcal{N}^{\prime
}h(k)\right) ^{\frac{p-p_{0}}{p-1}}.  \label{c17}
\end{eqnarray}%
Also, we note that%
\begin{align}
& \left( \frac{1}{n}\sum\limits_{k=1}^{n}\left( w(k)\left( \mathcal{N}%
^{\prime }h\right) ^{\frac{p-p_{0}}{p-1}}(k)\right) ^{\frac{-1}{p_{0}-1}%
}\right) ^{p_{0}-1}  \notag \\
& =\left( \frac{1}{n}\sum\limits_{k=1}^{n}w^{\frac{-1}{p_{0}-1}}(k)\left( 
\mathcal{N}^{\prime }h\right) ^{\frac{-\left( p-p_{0}\right) }{\left(
p_{0}-1\right) \left( p-1\right) }}(k)\right) ^{p_{0}-1}  \notag \\
& =\left( \frac{1}{n}\sum\limits_{k=1}^{n}\left( \mathcal{N}^{\prime
}h(k)\right) ^{\frac{-\left( p-p_{0}\right) }{\left( p_{0}-1\right) \left(
p-1\right) }}w^{\frac{-\left( p-p_{0}\right) }{\left( p_{0}-1\right) \left(
p-1\right) }}w^{\frac{p-p_{0}}{\left( p_{0}-1\right) \left( p-1\right) }%
}(k)w^{\frac{-1}{p_{0}-1}}(k)\right) ^{p_{0}-1}  \notag \\
& =\left( \frac{1}{n}\right) ^{p_{0}-1}\left( \sum\limits_{k=1}^{n}\left( 
\mathcal{N}^{\prime }h(k)\right) ^{\frac{-\left( p-p_{0}\right) }{\left(
p_{0}-1\right) \left( p-1\right) }}w^{\frac{-\left( p-p_{0}\right) }{\left(
p_{0}-1\right) \left( p-1\right) }}(k)w^{\frac{-1}{p-1}}(k)\right)
^{p_{0}-1}.  \label{c18}
\end{align}%
From\ the property $(c)$ in Lemma \ref{L1}, we observe that\ $\left( 
\mathcal{N}^{\prime }h\right) w\in \mathcal{A}_{1},$ and then 
\begin{equation}
\frac{1}{n}\sum\limits_{k=1}^{n}\left( \mathcal{N}^{\prime }h\right)
(k)w(k)\leq \left[ \left( \mathcal{N}^{\prime }h\right) w\right] _{\mathcal{A%
}_{1}}\left( \mathcal{N}^{\prime }h\right) (k)w(k).  \label{c19}
\end{equation}%
Substituting (\ref{c19}) into (\ref{c18}), we have for $p>p_{0}>1,$ $\left(
p-p_{0}\right) /\left[ \left( p_{0}-1\right) \left( p-1\right) \right] >0,$
that%
\begin{align}
& \left( \frac{1}{n}\sum\limits_{k=1}^{n}\left( w(k)\left( \mathcal{N}%
^{\prime }h\right) ^{\frac{p-p_{0}}{p-1}}(k)\right) ^{\frac{-1}{p_{0}-1}%
}\right) ^{p_{0}-1}  \notag \\
& \leq \left( \frac{1}{n}\sum\limits_{k=1}^{n}\left( \frac{1}{n}%
\sum\limits_{k=1}^{n}\left( \mathcal{N}^{\prime }h\right) (k)w(k)\right) ^{%
\frac{-\left( p-p_{0}\right) }{\left( p_{0}-1\right) \left( p-1\right) }}%
\left[ \left( \mathcal{N}^{\prime }h\right) w\right] _{\mathcal{A}_{1}}^{%
\frac{p-p_{0}}{\left( p_{0}-1\right) \left( p-1\right) }}w^{\frac{-1}{p-1}%
}(k)\right) ^{p_{0}-1}  \notag \\
& =\left[ \left( \mathcal{N}^{\prime }h\right) w\right] _{\mathcal{A}_{1}}^{%
\frac{p-p_{0}}{p-1}}\left( \frac{1}{n}\sum\limits_{k=1}^{n}\left( \mathcal{N}%
^{\prime }h\right) (k)w(k)\right) ^{\frac{-\left( p-p_{0}\right) }{p-1}%
}\left( \frac{1}{n}\sum\limits_{k=1}^{n}w^{\frac{-1}{p-1}}(k)\right)
^{p_{0}-1}.  \label{c20}
\end{align}%
From (\ref{c17}) and (\ref{c20}), we observe that%
\begin{eqnarray*}
&&\left( \frac{1}{n}\sum\limits_{k=1}^{n}w(k)\left( \mathcal{N}^{\prime
}h\right) ^{\frac{p-p_{0}}{p-1}}(k)\right) \left( \frac{1}{n}%
\sum\limits_{k=1}^{n}\left( w(k)\left( \mathcal{N}^{\prime }h\right) ^{\frac{%
p-p_{0}}{p-1}}(k)\right) ^{\frac{-1}{p_{0}-1}}\right) ^{p_{0}-1} \\
&\leq &\left[ \left( \mathcal{N}^{\prime }h\right) w\right] _{\mathcal{A}%
_{1}}^{\frac{p-p_{0}}{p-1}}\left[ \left( \frac{1}{n}\sum%
\limits_{k=1}^{n}w(k)\right) \left( \frac{1}{n}\sum\limits_{k=1}^{n}w^{\frac{%
-1}{p-1}}(k)\right) ^{p-1}\right] ^{\frac{p_{0}-1}{p-1}},
\end{eqnarray*}%
and then we have for $p>p_{0}>1$ and $w\in \mathcal{A}_{p},$ that%
\begin{equation*}
\left( \frac{1}{n}\sum\limits_{k=1}^{n}w(k)\left( \mathcal{N}^{\prime
}h\right) ^{\frac{p-p_{0}}{p-1}}(k)\right) \left( \frac{1}{n}%
\sum\limits_{k=1}^{n}\left( w(k)\left( \mathcal{N}^{\prime }h\right) ^{\frac{%
p-p_{0}}{p-1}}(k)\right) ^{\frac{-1}{p_{0}-1}}\right) ^{p_{0}-1}.
\end{equation*}%
Taking the supremum of the two sides of the last inequality$,$ we see that $%
w\left( \mathcal{N}^{\prime }h\right) ^{\frac{p-p_{0}}{p-1}}\in \mathcal{A}%
_{p_{0}}$ with the constant%
\begin{eqnarray*}
&&\left[ w\left( \mathcal{N}^{\prime }h\right) ^{\frac{p-p_{0}}{p-1}}\right]
_{\mathcal{A}_{p_{0}}}=\sup_{n\in \mathbb{Z}_{+}}\left( \frac{1}{n}%
\sum\limits_{k=1}^{n}w(k)\left( \mathcal{N}^{\prime }h\right) ^{\frac{p-p_{0}%
}{p-1}}(k)\right) \\
&&\times \left( \frac{1}{n}\sum\limits_{k=1}^{n}\left( w(k)\left( \mathcal{N}%
^{\prime }h\right) ^{\frac{p-p_{0}}{p-1}}(k)\right) ^{\frac{-1}{p_{0}-1}%
}\right) ^{p_{0}-1}\leq \left[ \left( \mathcal{N}^{\prime }h\right) w\right]
_{\mathcal{A}_{1}}^{\frac{p-p_{0}}{p-1}}\left[ w\right] _{\mathcal{A}_{p}}^{%
\frac{p_{0}-1}{p-1}},
\end{eqnarray*}%
which is the desired inequality. The proof is complete.
\end{proof}

\section{Main results}

In this section, we prove the main extrapolation theorem via\ the
Muckenhoupt weights with sharp constants. We assume that $f,$ $g$ are
nonnegative sequences belong to and $w_{0},$ $w$ are nonnegative weights.

\begin{theorem}
\label{T1}Assume that\ for some $p_{0},$ $1\leq p_{0}<\infty ,$ there exists
a positive increasing function $\mathcal{\varphi }_{p_{0}}$ on $[1,\infty )$
such that for every $w_{0}\in \mathcal{A}_{p_{0}},$%
\begin{equation}
\left( \sum_{k=1}^{\infty }w_{0}(k)f^{p_{0}}(k)\right) ^{\frac{1}{p_{0}}%
}\leq \mathcal{\varphi }_{p_{0}}\left( [w_{0}]_{\mathcal{A}_{p_{0}}}\right)
\left( \sum_{k=1}^{\infty }w_{0}(k)g^{p_{0}}(k)\right) ^{\frac{1}{p_{0}}}.
\label{*}
\end{equation}%
Then for all $p,$ $1<p<p_{0}<\infty $ and for all\ $w\in \mathcal{A}_{p},$%
\begin{equation}
\left( \sum_{k=1}^{\infty }w(k)f^{p}(k)\right) ^{\frac{1}{p}}\leq \mathcal{%
\varphi }_{p}\left( p_{0},p,[w]_{\mathcal{A}_{p}}\right) \left(
\sum_{k=1}^{\infty }w(k)g^{p}(k)\right) ^{\frac{1}{p}},  \label{*1}
\end{equation}%
where 
\begin{equation}
\mathcal{\varphi }_{p}\left( p_{0},p,[w]_{\mathcal{A}_{p}}\right) =2^{\frac{%
p_{0}-p}{p_{0}}}\mathcal{\varphi }_{p_{0}}\left( \left( 2\left\Vert \mathcal{%
M}\right\Vert _{\ell _{p}(w)}\right) ^{p_{0}-p}\left[ w\right] _{\mathcal{A}%
_{p}}\right) .  \label{**}
\end{equation}
\end{theorem}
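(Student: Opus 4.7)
The plan is to carry out a single weighted H\"{o}lder decomposition after manufacturing a weight in the target class $\mathcal{A}_{p_{0}}$ out of the given weight $w\in \mathcal{A}_{p}$. Because $p<p_{0}$, Lemma \ref{L*} supplies a natural candidate of the shape $w_{0}:=w(\mathcal{N}h)^{-(p_{0}-p)}$, which automatically belongs to $\mathcal{A}_{p_{0}}$ for any nonnegative sequence $h$ with the explicit bound $[w_{0}]_{\mathcal{A}_{p_{0}}}\leq ([\mathcal{N}h]_{\mathcal{A}_{1}})^{p_{0}-p}[w]_{\mathcal{A}_{p}}$. The crucial design choice is to take $h:=g$ itself, so that the pointwise inequality $g\leq \mathcal{N}g$ from part $(i)$ of Lemma \ref{L} can later be exploited to absorb the auxiliary factor $\mathcal{N}g$.

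With this choice, I would first rewrite
\begin{equation*}
\sum_{k=1}^{\infty}w(k)f^{p}(k)=\sum_{k=1}^{\infty}w_{0}(k)\,(\mathcal{N}g)^{p_{0}-p}(k)\,f^{p}(k),
\end{equation*}
and apply the (weighted) H\"{o}lder inequality with conjugate exponents $\gamma =p_{0}/p$ and $\nu =p_{0}/(p_{0}-p)$. Together with the algebraic identity $w_{0}(\mathcal{N}g)^{p_{0}}=w(\mathcal{N}g)^{p}$, this yields
\begin{equation*}
\sum_{k=1}^{\infty}w(k)f^{p}(k)\leq \left(\sum_{k=1}^{\infty}w_{0}(k)f^{p_{0}}(k)\right)^{p/p_{0}}\left(\sum_{k=1}^{\infty}w(k)(\mathcal{N}g)^{p}(k)\right)^{(p_{0}-p)/p_{0}}.
\end{equation*}

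The next step is to apply the hypothesis (\ref{*}) to the first factor, which is legitimate since $w_{0}\in \mathcal{A}_{p_{0}}$. Combining Lemma \ref{L*} with property $(iii)$ of Lemma \ref{L} gives $[w_{0}]_{\mathcal{A}_{p_{0}}}\leq (2\|\mathcal{M}\|_{\ell_{p}(w)})^{p_{0}-p}[w]_{\mathcal{A}_{p}}$, so the monotonicity of $\varphi_{p_{0}}$ produces exactly the constant appearing in (\ref{**}). The right-hand side of (\ref{*}) is then bounded by $\sum w g^{p}$ through the pointwise estimate $g\leq \mathcal{N}g$, which (after raising to the power $-(p_{0}-p)<0$) gives $w_{0}g^{p_{0}}\leq wg^{p}$. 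The remaining factor $\sum w(\mathcal{N}g)^{p}$ is controlled by property $(ii)$ of Lemma \ref{L}, which furnishes $\sum w(\mathcal{N}g)^{p}\leq 2^{p}\sum w g^{p}$. Collecting the exponents $p/p_{0}+(p_{0}-p)/p_{0}=1$ and taking the $1/p$-th power produces (\ref{*1}) with precisely the constant $2^{(p_{0}-p)/p_{0}}\varphi_{p_{0}}\bigl((2\|\mathcal{M}\|_{\ell_{p}(w)})^{p_{0}-p}[w]_{\mathcal{A}_{p}}\bigr)$.

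The main obstacle is essentially one of design rather than calculation: choosing $h=g$ and the correct negative exponent $-(p_{0}-p)$ in $w_{0}$, so that the H\"{o}lder split lands in $\mathcal{A}_{p_{0}}$ while leaving a residual factor that can be absorbed by Lemma \ref{L}. Once this is identified, the proof reduces to one application of H\"{o}lder together with three invocations of the Rubio de Francia algorithm's properties; no self-improving property of $\mathcal{A}_{p}$ is required, no linearity or sublinearity of any underlying operator is used on the pair $(f,g)$, and every constant is tracked explicitly.
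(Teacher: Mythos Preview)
Your argument is correct and follows the same route as the paper: construct the auxiliary weight $w_{0}=w(\mathcal{N}h)^{-(p_{0}-p)}\in\mathcal{A}_{p_{0}}$ via Lemma~\ref{L*}, split $\sum wf^{p}$ by H\"older with exponents $p_{0}/p$ and $p_{0}/(p_{0}-p)$, invoke the hypothesis (\ref{*}) on the $w_{0}$-factor, and close with properties $(i)$--$(iii)$ of Lemma~\ref{L}. The only difference is cosmetic: the paper sets
\[
h=\varepsilon\,\frac{f}{\|f\|_{\ell_{p}(w)}}+\frac{g}{\|g\|_{\ell_{p}(w)}}
\]
and lets $\varepsilon\to 0$ at the end, whereas you take $h=g$ directly. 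Your shortcut is legitimate here because the pointwise bound $w_{0}g^{p_{0}}\leq wg^{p}$ already forces $\sum w_{0}g^{p_{0}}<\infty$ whenever $\|g\|_{\ell_{p}(w)}<\infty$, so the hypothesis (\ref{*}) yields a finite bound for $\sum w_{0}f^{p_{0}}$ without any a priori control on $f$; the paper's $\varepsilon f$-term is the customary safeguard for this issue but is not needed once that observation is made. The constants and every intermediate estimate match exactly.
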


\begin{proof}
Let $\varepsilon >0$ and define 
\begin{equation}
h(k):=\varepsilon \frac{f(k)}{\left\Vert f\right\Vert _{\ell _{p}(w)}}+\frac{%
g(k)}{\left\Vert g\right\Vert _{\ell _{p}(w)}},\text{ for \ }k\in \mathbb{Z}%
_{+}.  \label{c}
\end{equation}%
\ Applying the\ weighted Minkowski inequality, we have 
\begin{align}
\left\Vert h\right\Vert _{\ell _{p}(w)}& =\left( \sum_{k=1}^{\infty
}w(k)h^{p}(k)\right) ^{\frac{1}{p}}  \notag \\
& =\left( \sum_{k=1}^{\infty }w(k)\left( \varepsilon \frac{f(k)}{\left\Vert
f\right\Vert _{\ell _{p}(w)}}+\frac{g(k)}{\left\Vert g\right\Vert _{\ell
_{p}(w)}}\right) ^{p}\right) ^{\frac{1}{p}}  \notag \\
& \leq \frac{\varepsilon }{\left\Vert f\right\Vert _{\ell _{p}(w)}}\left(
\sum_{k=1}^{\infty }w(k)f^{p}(k)\right) ^{\frac{1}{p}}+\frac{1}{\left\Vert
g\right\Vert _{\ell _{p}(w)}}\left( \sum_{k=1}^{\infty }w(k)g^{p}(k)\right)
^{\frac{1}{p}}  \notag \\
& =\varepsilon +1.  \label{c1}
\end{align}%
Since%
\begin{eqnarray}
\left\Vert f\right\Vert _{\ell _{p}(w)} &=&\left( \sum_{k=1}^{\infty
}w(k)f^{p}(k)\right) ^{\frac{1}{p}}  \notag \\
&=&\left( \sum_{k=1}^{\infty }f^{p}(k)\mathcal{N}h^{\frac{-\left(
p_{0}-p\right) p}{p_{0}}}(k)\mathcal{N}h^{\frac{\left( p_{0}-p\right) p}{%
p_{0}}}(k)w(k)\right) ^{\frac{1}{p}},  \label{ab}
\end{eqnarray}%
we have by applying the weighted H\"{o}lder inequality on the\ right hand
side of (\ref{ab})\ with $\gamma =p_{0}/p>1$ and $\nu =p_{0}/\left(
p_{0}-p\right) $, that%
\begin{equation}
\left\Vert f\right\Vert _{\ell _{p}(w)}\leq \left( \sum_{k=1}^{\infty
}w(k)f^{p_{0}}(k)\left( \mathcal{N}h\right) ^{-\left( p_{0}-p\right)
}(k)\right) ^{\frac{1}{p_{0}}}\left( \sum_{k=1}^{\infty }w(k)\left( \mathcal{%
N}h\right) ^{p}(k)\right) ^{\frac{p_{0}-p}{pp_{0}}}.  \label{c2}
\end{equation}%
By applying\ the property\ $(ii)$ of Lemma \ref{L}, we have for $p_{0}>p,$\
that 
\begin{equation*}
\left( \sum_{k=1}^{\infty }w(k)\left( \mathcal{N}h\right) ^{p}(k)\right) ^{%
\frac{p_{0}-p}{pp_{0}}}=\left\Vert \mathcal{N}h\right\Vert _{\ell _{p}(w)}^{%
\frac{p_{0}-p}{p_{0}}}\leq \left( 2\left\Vert h\right\Vert _{\ell
_{p}(w)}\right) ^{\frac{p_{0}-p}{p_{0}}},
\end{equation*}%
and then by using (\ref{c1}), we obtain%
\begin{equation}
\left( \sum_{k=1}^{\infty }w(k)\left( \mathcal{N}h\right) ^{p}(k)\right) ^{%
\frac{p_{0}-p}{pp_{0}}}\leq \left[ 2\left( \varepsilon +1\right) \right] ^{%
\frac{p_{0}-p}{p_{0}}}.  \label{ac2}
\end{equation}%
Substituting (\ref{ac2}) into (\ref{c2}), we have that 
\begin{equation}
\left\Vert f\right\Vert _{\ell _{p}(w)}\leq \left[ 2\left( \varepsilon
+1\right) \right] ^{\frac{p_{0}-p}{p_{0}}}\left( \sum_{k=1}^{\infty
}w(k)f^{p_{0}}(k)\left( \mathcal{N}h\right) ^{-\left( p_{0}-p\right)
}(k)\right) ^{\frac{1}{p_{0}}}.  \label{c3}
\end{equation}%
From Lemma \ref{L*}, we have that $w\left( \mathcal{N}h\right) ^{-\left(
p_{0}-p\right) }\in \mathcal{A}_{p_{0}}$ with the constant 
\begin{equation}
\left[ w\left( \mathcal{N}h\right) ^{-\left( p_{0}-p\right) }\right] _{%
\mathcal{A}_{p_{0}}}\leq \left( \left[ \mathcal{N}h\right] _{\mathcal{A}%
_{1}}\right) ^{p_{0}-p}\left[ w\right] _{\mathcal{A}_{p}}.  \label{c4}
\end{equation}%
Since $w\left( \mathcal{N}h\right) ^{-\left( p_{0}-p\right) }\in \mathcal{A}%
_{p_{0}},$ we can\ apply (\ref{*}) with $w_{0}=w\mathcal{N}h^{-\left(
p_{0}-p\right) }\in \mathcal{A}_{p_{0}}$ and then%
\begin{eqnarray}
&&\left( \sum_{k=1}^{\infty }w(k)\left( \mathcal{N}h\right) ^{-\left(
p_{0}-p\right) }(k)f^{p_{0}}(k)\right) ^{\frac{1}{p_{0}}}  \notag \\
&\leq &\mathcal{\varphi }_{p_{0}}\left( [w\left( \mathcal{N}h\right)
^{-\left( p_{0}-p\right) }]_{\mathcal{A}_{p_{0}}}\right) \left(
\sum_{k=1}^{\infty }w(k)\left( \mathcal{N}h\right) ^{-\left( p_{0}-p\right)
}(k)g^{p_{0}}(k)\right) ^{\frac{1}{p_{0}}}.  \label{c5}
\end{eqnarray}%
Since the function $\mathcal{\varphi }_{p_{0}}$ is increasing, we have from (%
\ref{c4}) that%
\begin{equation*}
\mathcal{\varphi }_{p_{0}}\left( [w\left( \mathcal{N}h\right) ^{-\left(
p_{0}-p\right) }]_{\mathcal{A}_{p_{0}}}\right) \leq \mathcal{\varphi }%
_{p_{0}}\left( \left( \left[ \mathcal{N}h\right] _{\mathcal{A}_{1}}\right)
^{p_{0}-p}\left[ w\right] _{\mathcal{A}_{p}}\right) ,
\end{equation*}%
and then the inequality (\ref{c5}) becomes%
\begin{eqnarray}
&&\left( \sum_{k=1}^{\infty }w(k)\left( \mathcal{N}h\right) ^{-\left(
p_{0}-p\right) }(k)f^{p_{0}}(k)\right) ^{\frac{1}{p_{0}}}  \notag \\
&\leq &\mathcal{\varphi }_{p_{0}}\left( \left( \left[ \mathcal{N}h\right] _{%
\mathcal{A}_{1}}\right) ^{p_{0}-p}\left[ w\right] _{\mathcal{A}_{p}}\right)
\left( \sum_{k=1}^{\infty }w(k)\left( \mathcal{N}h\right) ^{-\left(
p_{0}-p\right) }(k)g^{p_{0}}(k)\right) ^{\frac{1}{p_{0}}}.  \label{c6}
\end{eqnarray}%
From\ the property\ $(iii)$ in\ Lemma \ref{L}, we observe that%
\begin{equation*}
\left[ \mathcal{N}h\right] _{\mathcal{A}_{1}}\leq 2\left\Vert \mathcal{M}%
\right\Vert _{\ell _{p}(w)},
\end{equation*}%
and then we have for\ $p_{0}>p$ and the\ increasing function $\mathcal{%
\varphi }_{p_{0}}$ that%
\begin{equation}
\mathcal{\varphi }_{p_{0}}\left( \left( \left[ \mathcal{N}h\right] _{%
\mathcal{A}_{1}}\right) ^{p_{0}-p}\left[ w\right] _{\mathcal{A}_{p}}\right)
\leq \mathcal{\varphi }_{p_{0}}\left( \left( 2\left\Vert \mathcal{M}%
\right\Vert _{\ell _{p}(w)}\right) ^{p_{0}-p}\left[ w\right] _{\mathcal{A}%
_{p}}\right) .  \label{c7}
\end{equation}%
Substituting (\ref{c7}) into (\ref{c6}), we obtain%
\begin{eqnarray}
&&\left( \sum_{k=1}^{\infty }w(k)\left( \mathcal{N}h\right) ^{-\left(
p_{0}-p\right) }(k)f^{p_{0}}(k)\right) ^{\frac{1}{p_{0}}}  \notag \\
&\leq &\mathcal{\varphi }_{p_{0}}\left( \left( 2\left\Vert \mathcal{M}%
\right\Vert _{\ell _{p}(w)}\right) ^{p_{0}-p}\left[ w\right] _{\mathcal{A}%
_{p}}\right) \left( \sum_{k=1}^{\infty }w(k)\left( \mathcal{N}h\right)
^{-\left( p_{0}-p\right) }(k)g^{p_{0}}(k)\right) ^{\frac{1}{p_{0}}}.
\label{c8}
\end{eqnarray}%
Substituting (\ref{c8}) into (\ref{c3}), we get%
\begin{eqnarray}
\left\Vert f\right\Vert _{\ell _{p}(w)} &\leq &\left[ 2\left( \varepsilon
+1\right) \right] _{p_{0}}^{\frac{p_{0}-p}{p_{0}}}\mathcal{\varphi }%
_{p_{0}}\left( \left( 2\left\Vert \mathcal{M}\right\Vert _{\ell
_{p}(w)}\right) ^{p_{0}-p}\left[ w\right] _{\mathcal{A}_{p}}\right)  \notag
\\
&&\times \left( \sum_{k=1}^{\infty }w(k)\left( \mathcal{N}h\right) ^{-\left(
p_{0}-p\right) }(k)g^{p_{0}}(k)\right) ^{\frac{1}{p_{0}}}.  \label{c9}
\end{eqnarray}%
From (\ref{c}), where $\varepsilon >0$ and\ the property\ $(i)$ in\ Lemma %
\ref{L}, we observe that%
\begin{equation}
\frac{g(k)}{\left\Vert g\right\Vert _{\ell _{p}(w)}}\leq h(k)\leq \mathcal{N}%
h(k).  \label{c10}
\end{equation}%
Substituting (\ref{c10}) into (\ref{c9}), we have for $p_{0}>p,$\ that%
\begin{eqnarray*}
\left\Vert f\right\Vert _{\ell _{p}(w)} &\leq &\left[ 2\left( \varepsilon
+1\right) \right] _{p_{0}}^{\frac{p_{0}-p}{p_{0}}}\mathcal{\varphi }%
_{p_{0}}\left( \left( 2\left\Vert \mathcal{M}\right\Vert _{\ell
_{p}(w)}\right) ^{p_{0}-p}\left[ w\right] _{\mathcal{A}_{p}}\right) \\
&&\times \left\Vert g\right\Vert _{\ell _{p}(w)}^{\frac{p_{0}-p}{p_{0}}%
}\left( \sum_{k=1}^{\infty }w(k)\left[ g(k)\right] ^{-\left( p_{0}-p\right)
}g^{p_{0}}(k)\right) ^{\frac{1}{p_{0}}} \\
&=&\left[ 2\left( \varepsilon +1\right) \right] ^{\frac{p_{0}-p}{p_{0}}}%
\mathcal{\varphi }_{p_{0}}\left( \left( 2\left\Vert \mathcal{M}\right\Vert
_{\ell _{p}(w)}\right) ^{p_{0}-p}\left[ w\right] _{\mathcal{A}_{p}}\right) \\
&&\times \left\Vert g\right\Vert _{\ell _{p}(w)}^{\frac{p_{0}-p}{p_{0}}%
}\left( \sum_{k=1}^{\infty }w(k)g^{p}(k)\right) ^{\frac{1}{p_{0}}} \\
&=&\left[ 2\left( \varepsilon +1\right) \right] ^{\frac{p_{0}-p}{p_{0}}}%
\mathcal{\varphi }_{p_{0}}\left( \left( 2\left\Vert \mathcal{M}\right\Vert
_{\ell _{p}(w)}\right) ^{p_{0}-p}\left[ w\right] _{\mathcal{A}_{p}}\right)
\left\Vert g\right\Vert _{\ell _{p}(w)},
\end{eqnarray*}%
i.e.%
\begin{eqnarray*}
&&\left( \sum_{k=1}^{\infty }w(k)f^{p}(k)\right) ^{\frac{1}{p}} \\
&\leq &\left[ 2\left( \varepsilon +1\right) \right] ^{\frac{p_{0}-p}{p_{0}}}%
\mathcal{\varphi }_{p_{0}}\left( \left( 2\left\Vert \mathcal{M}\right\Vert
_{\ell _{p}(w)}\right) ^{p_{0}-p}\left[ w\right] _{\mathcal{A}_{p}}\right)
\left( \sum_{k=1}^{\infty }w(k)g^{p}(k)\right) ^{\frac{1}{p}}.
\end{eqnarray*}%
Taking the limit when $\varepsilon \rightarrow 0,$ we have that%
\begin{eqnarray*}
&&\left( \sum_{k=1}^{\infty }w(k)f^{p}(k)\right) ^{\frac{1}{p}} \\
&\leq &2^{\frac{p_{0}-p}{p_{0}}}\mathcal{\varphi }_{p_{0}}\left( \left(
2\left\Vert \mathcal{M}\right\Vert _{\ell _{p}(w)}\right) ^{p_{0}-p}\left[ w%
\right] _{\mathcal{A}_{p}}\right) \left( \sum_{k=1}^{\infty
}w(k)g^{p}(k)\right) ^{\frac{1}{p}},
\end{eqnarray*}%
which satisfies (\ref{*1}).\ The proof is complete.
\end{proof}

\begin{theorem}
\label{T2}Assume that\ for some $p_{0},$ $1\leq p_{0}<\infty ,$ there exists
a positive increasing function $\mathcal{\varphi }_{p_{0}}$ on $[1,\infty )$
such that for every $w_{0}\in \mathcal{A}_{p_{0}},$%
\begin{equation}
\left( \sum_{k=1}^{\infty }w_{0}(k)f^{p_{0}}(k)\right) ^{\frac{1}{p_{0}}%
}\leq \mathcal{\varphi }_{p_{0}}\left( [w_{0}]_{\mathcal{A}_{p_{0}}}\right)
\left( \sum_{k=1}^{\infty }w_{0}(k)g^{p_{0}}(k)\right) ^{\frac{1}{p_{0}}}.
\label{*2}
\end{equation}%
Then for all $p,$ $1<p_{0}<p<\infty $\ and for all\ $w\in \mathcal{A}_{p},$%
\begin{equation}
\left( \sum_{k=1}^{\infty }w(k)f^{p}(k)\right) ^{\frac{1}{p}}\leq \mathcal{%
\varphi }_{p}\left( p_{0},p,[w]_{\mathcal{A}_{p}}\right) \left(
\sum_{k=1}^{\infty }w(k)g^{p}(k)\right) ^{\frac{1}{p}},  \label{*12}
\end{equation}%
where 
\begin{equation}
\mathcal{\varphi }_{p}\left( p_{0},p,[w]_{\mathcal{A}_{p}}\right) =2^{\frac{%
p-p_{0}}{\left( p-1\right) p_{0}}}\mathcal{\varphi }_{p_{0}}\left( \left(
2\left\Vert \mathcal{M}\right\Vert _{\ell _{p^{\prime }}(w^{1-p^{\prime
}})}\right) ^{\frac{p-p_{0}}{p-1}}\left[ w\right] _{\mathcal{A}_{p}}^{\frac{%
p_{0}-1}{p-1}}\right) ,  \label{**12}
\end{equation}%
with $p^{\prime }=p/\left( p-1\right) .$
\end{theorem}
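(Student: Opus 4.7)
The plan is to dualize the strategy used for Theorem \ref{T1}: where that proof built an auxiliary $h$ normalized in $\ell_p(w)$, applied the Rubio de Francia algorithm $\mathcal{N}$, and invoked the factorization Lemma \ref{L*}, here I build $h$ out of the ``dual exponents'' $f^{p-1},g^{p-1}$, normalize it in $\ell_{p'}(w)$, apply the dual algorithm $\mathcal{N}^{\prime}$ from Lemma \ref{L1}, and invoke the dual factorization Lemma \ref{L1*}. Assume without loss of generality $0<\|f\|_{\ell_p(w)},\,\|g\|_{\ell_p(w)}<\infty$, and for $\varepsilon>0$ set
\[
h(k):=\frac{f^{p-1}(k)}{\|f\|_{\ell_p(w)}^{p-1}}+\varepsilon\,\frac{g^{p-1}(k)}{\|g\|_{\ell_p(w)}^{p-1}}.
\]
Using $\|F^{p-1}\|_{\ell_{p'}(w)}=\|F\|_{\ell_p(w)}^{p-1}$ (since $(p-1)p'=p$) together with weighted Minkowski, one gets $\|h\|_{\ell_{p'}(w)}\le 1+\varepsilon$.

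Applying Lemma \ref{L1} to $h$ produces $h\le\mathcal{N}^{\prime}h$, the bound $\|\mathcal{N}^{\prime}h\|_{\ell_{p'}(w)}\le 2(1+\varepsilon)$, and $[w\mathcal{N}^{\prime}h]_{\mathcal{A}_1}\le 2\|\mathcal{M}^{\prime}\|_{\ell_{p'}(w)}$. A one-line check via the substitution $\sigma=w^{1-p'}$ in $\mathcal{M}^{\prime}h=\mathcal{M}(wh)/w$ gives $\|\mathcal{M}^{\prime}\|_{\ell_{p'}(w)}\le\|\mathcal{M}\|_{\ell_{p'}(w^{1-p'})}$, producing the operator norm appearing in (\ref{**12}). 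Setting $w_0:=w(\mathcal{N}^{\prime}h)^{(p-p_0)/(p-1)}$ and invoking Lemma \ref{L1*} gives
\[
[w_0]_{\mathcal{A}_{p_0}}\le\bigl(2\|\mathcal{M}\|_{\ell_{p'}(w^{1-p'})}\bigr)^{(p-p_0)/(p-1)}\,[w]_{\mathcal{A}_p}^{(p_0-1)/(p-1)},
\]
so the hypothesis (\ref{*2}) applied to the pair $(f,g)$ with this weight $w_0$ yields
\[
\Bigl(\sum_{k}w_0(k)f^{p_0}(k)\Bigr)^{1/p_0}\le\varphi_{p_0}\bigl([w_0]_{\mathcal{A}_{p_0}}\bigr)\Bigl(\sum_{k}w_0(k)g^{p_0}(k)\Bigr)^{1/p_0}.
\]

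It remains to translate both sides back to $\ell^p(w)$. For the left side, $\mathcal{N}^{\prime}h(k)\ge h(k)\ge f^{p-1}(k)/\|f\|_{\ell_p(w)}^{p-1}$; raising to the positive power $(p-p_0)/(p-1)$ and multiplying by $w(k)f^{p_0}(k)$ gives $w_0(k)f^{p_0}(k)\ge w(k)f^p(k)/\|f\|_{\ell_p(w)}^{p-p_0}$, hence $\bigl(\sum_{k}w_0f^{p_0}\bigr)^{1/p_0}\ge\|f\|_{\ell_p(w)}$. For the right side, the weighted Hölder inequality (\ref{wh}) with the conjugate pair $\gamma=p/p_0>1$ and $\nu=p/(p-p_0)$, together with $(p-p_0)\nu/(p-1)=p'$, gives
\[
\sum_{k}w_0(k)g^{p_0}(k)\le\|g\|_{\ell_p(w)}^{p_0}\,\|\mathcal{N}^{\prime}h\|_{\ell_{p'}(w)}^{p'(p-p_0)/p}\le\|g\|_{\ell_p(w)}^{p_0}\,[2(1+\varepsilon)]^{p'(p-p_0)/p}.
\]
Since $p'(p-p_0)/(pp_0)=(p-p_0)/((p-1)p_0)$, combining everything and using monotonicity of $\varphi_{p_0}$ yields
\[
\|f\|_{\ell_p(w)}\le[2(1+\varepsilon)]^{(p-p_0)/((p-1)p_0)}\,\varphi_{p_0}\!\Bigl(\bigl(2\|\mathcal{M}\|_{\ell_{p'}(w^{1-p'})}\bigr)^{(p-p_0)/(p-1)}[w]_{\mathcal{A}_p}^{(p_0-1)/(p-1)}\Bigr)\|g\|_{\ell_p(w)},
\]
and letting $\varepsilon\to 0$ recovers (\ref{*12})--(\ref{**12}) exactly.

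The main obstacle is conceptual rather than computational: the decisive Hölder step in Theorem \ref{T1} uses exponent $\gamma=p_0/p>1$, which is unavailable here because $p_0<p$. The fix is to swap the roles of $f$ and $g$ in the auxiliary $h$ (so that $f^{p-1}/\|f\|_{\ell_p(w)}^{p-1}$ is the main term and the $g$-piece is only an $\varepsilon$-perturbation) and to work in the dual space $\ell_{p'}(w)$ where $\mathcal{M}^{\prime}$ is bounded by Lemma \ref{s1}. After this swap, Hölder is applied on the $g$-side with exponent $p/p_0>1$, and the comparison $\bigl(\sum_{k}w_0f^{p_0}\bigr)^{1/p_0}\ge\|f\|_{\ell_p(w)}$ replaces the $g$-side lower bound used in Theorem \ref{T1}. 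Once this dualization is in place, Lemma \ref{L1*} plays the same structural role that Lemma \ref{L*} played in Theorem \ref{T1}, and the remainder of the argument runs in parallel.
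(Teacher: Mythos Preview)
Your proof is correct and follows essentially the same route as the paper: apply the dual Rubio de Francia algorithm $\mathcal{N}'$ to an auxiliary $h\in\ell_{p'}(w)$, invoke Lemma~\ref{L1*} to put $w_0=w(\mathcal{N}'h)^{(p-p_0)/(p-1)}$ in $\mathcal{A}_{p_0}$, apply the hypothesis, and close with H\"older at exponent $p/p_0$ on the $g$-side together with property~(b) of Lemma~\ref{L1}. The only cosmetic difference is the choice of $h$: the paper obtains $h$ with $\|h\|_{\ell_{p'}(w)}=1$ from the duality representation $\|f\|_{\ell_p(w)}=\sum_k f(k)h(k)w(k)$ and then applies H\"older with indices $(p_0,p_0')$ to reach $\bigl(\sum w_0 f^{p_0}\bigr)^{1/p_0}$, whereas you take the explicit extremizer $h\approx f^{p-1}/\|f\|_{\ell_p(w)}^{p-1}$ and use a pointwise comparison instead---these are equivalent, and your $\varepsilon g^{p-1}$ perturbation, while harmless, is never actually used and could be dropped.
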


\begin{proof}
Since $f\in \ell _{p}(w),$ we may assume without loss of generality that
there exists a nonnegative sequence\ $h\in \ell _{p^{\prime }}(w)$, $%
\left\Vert h\right\Vert _{\ell _{p^{\prime }}(w)}=1,$ such that%
\begin{equation}
\left\Vert f\right\Vert _{\ell _{p}(w)}=\sum_{k=1}^{\infty }f(k)h(k)w(k).
\label{c12}
\end{equation}%
By applying the property $(a)$\ in\ Lemma \ref{L1} on the right hand side\
of (\ref{c12}), we have for $p>p_{0}>1$\ that%
\begin{eqnarray}
\sum_{k=1}^{\infty }f(k)h(k)w(k) &=&\sum_{k=1}^{\infty }f(k)h^{\frac{p-p_{0}%
}{p_{0}\left( p-1\right) }}(k)h^{\frac{p\left( p_{0}-1\right) }{p_{0}\left(
p-1\right) }}(k)w(k)  \notag \\
&\leq &\sum_{k=1}^{\infty }f(k)\left( \mathcal{N}^{\prime }h(k)\right) ^{%
\frac{p-p_{0}}{p_{0}\left( p-1\right) }}h^{\frac{p\left( p_{0}-1\right) }{%
p_{0}\left( p-1\right) }}(k)w(k).  \label{c13}
\end{eqnarray}%
Applying the weighted H\"{o}lder inequality on the term%
\begin{equation*}
\sum_{k=1}^{\infty }f(k)\left( \mathcal{N}^{\prime }h(k)\right) ^{\frac{%
p-p_{0}}{p_{0}\left( p-1\right) }}h^{\frac{p\left( p_{0}-1\right) }{%
p_{0}\left( p-1\right) }}(k)w(k),
\end{equation*}%
with indices $p_{0}>1$ and $p_{0}=p_{0}/\left( p_{0}-1\right) ,$ we see that%
\begin{eqnarray*}
&&\sum_{k=1}^{\infty }f(k)\left( \mathcal{N}^{\prime }h(k)\right) ^{\frac{%
p-p_{0}}{p_{0}\left( p-1\right) }}h^{\frac{p\left( p_{0}-1\right) }{%
p_{0}\left( p-1\right) }}(k)w(k) \\
&\leq &\left( \sum_{k=1}^{\infty }f^{p_{0}}(k)\left( \mathcal{N}^{\prime
}h(k)\right) ^{\frac{p-p_{0}}{p-1}}w(k)\right) ^{\frac{1}{p_{0}}}\left(
\sum_{k=1}^{\infty }h^{\frac{p}{p-1}}(k)w(k)\right) ^{\frac{p_{0}-1}{p_{0}}},
\end{eqnarray*}%
and then since $\left\Vert h\right\Vert _{\ell _{p^{\prime }}(w)}=1$, we that%
\begin{equation}
\sum_{k=1}^{\infty }f(k)\left( \mathcal{N}^{\prime }h\right) ^{\frac{p-p_{0}%
}{p_{0}\left( p-1\right) }}(k)h^{\frac{p\left( p_{0}-1\right) }{p_{0}\left(
p-1\right) }}(k)w(k)\leq \left( \sum_{k=1}^{\infty }f^{p_{0}}(k)\left( 
\mathcal{N}^{\prime }h\right) ^{\frac{p-p_{0}}{p-1}}(k)w(k)\right) ^{\frac{1%
}{p_{0}}}.  \label{c14}
\end{equation}%
Substituting (\ref{c14}) into (\ref{c13}), we observe that%
\begin{equation}
\sum_{k=1}^{\infty }f(k)h(k)w(k)\leq \left( \sum_{k=1}^{\infty
}f^{p_{0}}(k)\left( \mathcal{N}^{\prime }h\right) ^{\frac{p-p_{0}}{p-1}%
}(k)w(k)\right) ^{\frac{1}{p_{0}}},  \notag
\end{equation}%
and then we have from (\ref{c12}) that%
\begin{equation}
\left\Vert f\right\Vert _{\ell _{p}(w)}\leq \left( \sum_{k=1}^{\infty
}f^{p_{0}}(k)w(k)\left( \mathcal{N}^{\prime }h\right) ^{\frac{p-p_{0}}{p-1}%
}(k)\right) ^{\frac{1}{p_{0}}}.  \label{c15}
\end{equation}%
From Lemma \ref{L1*}, we have that\ $w\left( \mathcal{N}^{\prime }h\right) ^{%
\frac{p-p_{0}}{p-1}}\in \mathcal{A}_{p_{0}}$ with the constant%
\begin{equation}
\left[ w\left( \mathcal{N}^{\prime }h\right) ^{\frac{p-p_{0}}{p-1}}\right] _{%
\mathcal{A}_{p_{0}}}\leq \left[ \left( \mathcal{N}^{\prime }h\right) w\right]
_{\mathcal{A}_{1}}^{\frac{p-p_{0}}{p-1}}\left[ w\right] _{\mathcal{A}_{p}}^{%
\frac{p_{0}-1}{p-1}},  \label{c21}
\end{equation}%
Since $w\left( \mathcal{N}^{\prime }h\right) ^{\frac{p-p_{0}}{p-1}}\in 
\mathcal{A}_{p_{0}},$ we can apply (\ref{*2}) with $w_{0}=w\left( \mathcal{N}%
^{\prime }h\right) ^{\frac{p-p_{0}}{p-1}}\in \mathcal{A}_{p_{0}}$ to get%
\begin{eqnarray}
&&\left( \sum_{k=1}^{\infty }w(k)\left( \mathcal{N}^{\prime }h\right) ^{%
\frac{p-p_{0}}{p-1}}(k)f^{p_{0}}(k)\right) ^{\frac{1}{p_{0}}}  \notag \\
&\leq &\mathcal{\varphi }_{p_{0}}\left( [w\left( \mathcal{N}^{\prime
}h\right) ^{\frac{p-p_{0}}{p-1}}]_{\mathcal{A}_{p_{0}}}\right) \left(
\sum_{k=1}^{\infty }w(k)\left( \mathcal{N}^{\prime }h\right) ^{\frac{p-p_{0}%
}{p-1}}(k)g^{p_{0}}(k)\right) ^{\frac{1}{p_{0}}}.  \label{c22}
\end{eqnarray}%
Since $\mathcal{\varphi }_{p_{0}}$ is an increasing function, we have from (%
\ref{c21}) that%
\begin{equation}
\mathcal{\varphi }_{p_{0}}\left( \left[ w\left( \mathcal{N}^{\prime
}h\right) ^{\frac{p-p_{0}}{p-1}}\right] _{\mathcal{A}_{p_{0}}}\right) \leq 
\mathcal{\varphi }_{p_{0}}\left( \left[ \left( \mathcal{N}^{\prime }h\right)
w\right] _{\mathcal{A}_{1}}^{\frac{p-p_{0}}{p-1}}\left[ w\right] _{\mathcal{A%
}_{p}}^{\frac{p_{0}-1}{p-1}}\right) ,  \notag
\end{equation}%
and then the inequality (\ref{c22}) becomes 
\begin{eqnarray}
&&\left( \sum_{k=1}^{\infty }w(k)\left( \mathcal{N}^{\prime }h\right) ^{%
\frac{p-p_{0}}{p-1}}(k)f^{p_{0}}(k)\right) ^{\frac{1}{p_{0}}}  \notag \\
&\leq &\mathcal{\varphi }_{p_{0}}\left( \left[ \left( \mathcal{N}^{\prime
}h\right) w\right] _{\mathcal{A}_{1}}^{\frac{p-p_{0}}{p-1}}\left[ w\right] _{%
\mathcal{A}_{p}}^{\frac{p_{0}-1}{p-1}}\right) \left( \sum_{k=1}^{\infty
}w(k)\left( \mathcal{N}^{\prime }h\right) ^{\frac{p-p_{0}}{p-1}%
}(k)g^{p_{0}}(k)\right) ^{\frac{1}{p_{0}}}.  \label{c23}
\end{eqnarray}%
Again by applying the property $(c)$ in Lemma \ref{L1} , we have for\ $%
p>p_{0}>1$\ that\ 
\begin{equation*}
\left[ \left( \mathcal{N}^{\prime }h\right) w\right] _{\mathcal{A}_{1}}^{%
\frac{p-p_{0}}{p-1}}\left[ w\right] _{\mathcal{A}_{p}}^{\frac{p_{0}-1}{p-1}%
}\leq \left( 2\left\Vert \mathcal{M}^{\prime }\right\Vert _{\ell _{p^{\prime
}}(w)}\right) ^{\frac{p-p_{0}}{p-1}}\left[ w\right] _{\mathcal{A}_{p}}^{%
\frac{p_{0}-1}{p-1}}.
\end{equation*}%
Since $\mathcal{\varphi }_{p_{0}}$ is an increasing function, we get%
\begin{equation}
\mathcal{\varphi }_{p_{0}}\left( \left[ \left( \mathcal{N}^{\prime }h\right)
w\right] _{\mathcal{A}_{1}}^{\frac{p-p_{0}}{p-1}}\left[ w\right] _{\mathcal{A%
}_{p}}^{\frac{p_{0}-1}{p-1}}\right) \leq \mathcal{\varphi }_{p_{0}}\left(
\left( 2\left\Vert \mathcal{M}^{\prime }\right\Vert _{\ell _{p^{\prime
}}(w)}\right) ^{\frac{p-p_{0}}{p-1}}\left[ w\right] _{\mathcal{A}_{p}}^{%
\frac{p_{0}-1}{p-1}}\right) .  \label{c24}
\end{equation}%
Since $\mathcal{M}^{\prime }f(k)=\left( 1/w(k)\right) \mathcal{M}\left(
fw\right) (k),$ we have that%
\begin{eqnarray}
\left\Vert \mathcal{M}^{\prime }\right\Vert _{\ell _{p^{\prime }}(w)}
&=&\left( \sum_{k=1}^{\infty }w(k)\left( \mathcal{M}^{\prime }f(k)\right)
^{p^{\prime }}\right) ^{\frac{1}{p^{\prime }}}  \notag \\
&=&\left( \sum_{k=1}^{\infty }w^{1-p^{\prime }}(k)\mathcal{M}^{p^{\prime
}}\left( fw\right) (k)\right) ^{\frac{1}{p^{\prime }}}=\left\Vert \mathcal{M}%
\right\Vert _{\ell _{p^{\prime }}(w^{1-p^{\prime }})}.  \label{c25}
\end{eqnarray}%
Substituting (\ref{c25}) into (\ref{c24}), we get%
\begin{equation*}
\mathcal{\varphi }_{p_{0}}\left( \left[ \left( \mathcal{N}^{\prime }h\right)
w\right] _{\mathcal{A}_{1}}^{\frac{p-p_{0}}{p-1}}\left[ w\right] _{\mathcal{A%
}_{p}}^{\frac{p_{0}-1}{p-1}}\right) \leq \mathcal{\varphi }_{p_{0}}\left(
\left( 2\left\Vert \mathcal{M}\right\Vert _{\ell _{p^{\prime
}}(w^{1-p^{\prime }})}\right) ^{\frac{p-p_{0}}{p-1}}\left[ w\right] _{%
\mathcal{A}_{p}}^{\frac{p_{0}-1}{p-1}}\right) ,
\end{equation*}%
and then the inequality (\ref{c23}) becomes%
\begin{eqnarray}
\left( \sum_{k=1}^{\infty }w(k)\left( \mathcal{N}^{\prime }h\right) ^{\frac{%
p-p_{0}}{p-1}}(k)f^{p_{0}}(k)\right) ^{\frac{1}{p_{0}}} &\leq &\mathcal{%
\varphi }_{p_{0}}\left( \left( 2\left\Vert \mathcal{M}\right\Vert _{\ell
_{p^{\prime }}(w^{1-p^{\prime }})}\right) ^{\frac{p-p_{0}}{p-1}}\left[ w%
\right] _{\mathcal{A}_{p}}^{\frac{p_{0}-1}{p-1}}\right)  \notag \\
&&\times \left( \sum_{k=1}^{\infty }w(k)\left( \mathcal{N}^{\prime }h\right)
^{\frac{p-p_{0}}{p-1}}(k)g^{p_{0}}(k)\right) ^{\frac{1}{p_{0}}}.  \label{c26}
\end{eqnarray}%
From\ (\ref{c15}) and (\ref{c26}), we have that%
\begin{eqnarray}
\left\Vert f\right\Vert _{\ell _{p}(w)} &\leq &\mathcal{\varphi }%
_{p_{0}}\left( \left( 2\left\Vert \mathcal{M}\right\Vert _{\ell ^{p^{\prime
}}(w^{1-p^{\prime }})}\right) ^{\frac{p-p_{0}}{p-1}}\left[ w\right] _{%
\mathcal{A}_{p}}^{\frac{p_{0}-1}{p-1}}\right)  \notag \\
&&\times \left( \sum_{k=1}^{\infty }w(k)\left( \mathcal{N}^{\prime }h\right)
^{\frac{p-p_{0}}{p-1}}(k)g^{p_{0}}(k)\right) ^{\frac{1}{p_{0}}}.  \label{c27}
\end{eqnarray}%
Applying the weighted H\"{o}lder inequality on the term%
\begin{equation*}
\sum_{k=1}^{\infty }w(k)\left( \mathcal{N}^{\prime }h\right) ^{\frac{p-p_{0}%
}{p-1}}(k)g^{p_{0}}(k),
\end{equation*}%
with $\gamma =p/p_{0}>1$ and $\nu =p/\left( p-p_{0}\right) ,$ we see that%
\begin{eqnarray}
&&\sum_{k=1}^{\infty }w(k)\left( \mathcal{N}^{\prime }h\right) ^{\frac{%
p-p_{0}}{p-1}}(k)g^{p_{0}}(k)  \notag \\
&\leq &\left( \sum_{k=1}^{\infty }w(k)\left( \mathcal{N}^{\prime }h\right) ^{%
\frac{p}{p-1}}(k)\right) ^{\frac{p-p_{0}}{p}}\left( \sum_{k=1}^{\infty
}w(k)g^{p}(k)\right) ^{\frac{p_{0}}{p}}.  \label{c28}
\end{eqnarray}%
Substituting (\ref{c28}) into (\ref{c27}), we obtain%
\begin{eqnarray}
\left\Vert f\right\Vert _{\ell _{p}(w)} &\leq &\mathcal{\varphi }%
_{p_{0}}\left( \left( 2\left\Vert \mathcal{M}\right\Vert _{\ell _{p^{\prime
}}(w^{1-p^{\prime }})}\right) ^{\frac{p-p_{0}}{p-1}}\left[ w\right] _{%
\mathcal{A}_{p}}^{\frac{p_{0}-1}{p-1}}\right)  \notag \\
&&\times \left( \sum_{k=1}^{\infty }w(k)\left( \mathcal{N}^{\prime }h\right)
^{\frac{p}{p-1}}(k)\right) ^{\frac{p-p_{0}}{pp_{0}}}\left(
\sum_{k=1}^{\infty }w(k)g^{p}(k)\right) ^{\frac{1}{p}}.  \label{c29}
\end{eqnarray}%
From the property $(b)$ in Lemma \ref{L1}, we have for $p>p_{0}>1$ that 
\begin{equation*}
\left( \sum_{k=1}^{\infty }w(k)\left( \mathcal{N}^{\prime }h\right) ^{\frac{p%
}{p-1}}(k)\right) ^{\frac{p-p_{0}}{pp_{0}}}=\left\Vert \mathcal{N}^{\prime
}h\right\Vert _{\ell _{p^{\prime }}(w)}^{\frac{p-p_{0}}{\left( p-1\right)
p_{0}}}\leq \left( 2\left\Vert h\right\Vert _{\ell _{p^{\prime }}(w)}\right)
^{\frac{p-p_{0}}{\left( p-1\right) p_{0}}}.
\end{equation*}%
Since $\left\Vert h\right\Vert _{\ell _{p^{\prime }}(w)}=1$, we have that%
\begin{equation}
\left( \sum_{k=1}^{\infty }w(k)\left( \mathcal{N}^{\prime }h\right) ^{\frac{p%
}{p-1}}(k)\right) ^{\frac{p-p_{0}}{pp_{0}}}\leq 2^{\frac{p-p_{0}}{\left(
p-1\right) p_{0}}}.  \label{c30}
\end{equation}%
Substituting (\ref{c30}) into (\ref{c29}), we obtain%
\begin{eqnarray*}
\left\Vert f\right\Vert _{\ell _{p}(w)} &\leq &2^{\frac{p-p_{0}}{\left(
p-1\right) p_{0}}}\mathcal{\varphi }_{p_{0}}\left( \left( 2\left\Vert 
\mathcal{M}\right\Vert _{\ell ^{p^{\prime }}(w^{1-p^{\prime }})}\right) ^{%
\frac{p-p_{0}}{p-1}}\left[ w\right] _{\mathcal{A}_{p}}^{\frac{p_{0}-1}{p-1}%
}\right) \\
&&\times \left( \sum_{k=1}^{\infty }w(k)g^{p}(k)\right) ^{\frac{1}{p}},
\end{eqnarray*}%
i.e.%
\begin{eqnarray*}
\left( \sum_{k=1}^{\infty }w(k)f^{p}(k)\right) ^{\frac{1}{p}} &\leq &2^{%
\frac{p-p_{0}}{\left( p-1\right) p_{0}}}\mathcal{\varphi }_{p_{0}}\left(
\left( 2\left\Vert \mathcal{M}\right\Vert _{\ell ^{p^{\prime
}}(w^{1-p^{\prime }})}\right) ^{\frac{p-p_{0}}{p-1}}\left[ w\right] _{%
\mathcal{A}_{p}}^{\frac{p_{0}-1}{p-1}}\right) \\
&&\times \left( \sum_{k=1}^{\infty }w(k)g^{p}(k)\right) ^{\frac{1}{p}},
\end{eqnarray*}%
which satisfies (\ref{*12}). The proof is complete.
\end{proof}

By combining Theorem \ref{T1} and\ Theorem \ref{T2}, we get the following
theorem.

\begin{theorem}
\label{T3}Assume that\ for some $p_{0},$ $1\leq p_{0}<\infty ,$ there exists
a positive increasing function $\mathcal{\varphi }_{p_{0}}$ on $[1,\infty )$
such that for every $w_{0}\in \mathcal{A}_{p_{0}},$%
\begin{equation}
\sum_{k=1}^{\infty }w_{0}(k)f^{p_{0}}(k)\leq \mathcal{\varphi }%
_{p_{0}}\left( [w_{0}]_{\mathcal{A}_{p_{0}}}\right) \sum_{k=1}^{\infty
}w_{0}(k)g^{p_{0}}(k).  \label{3,2}
\end{equation}%
Then for all $p,$ $1\leq p<\infty $ and for all\ $w\in \mathcal{A}_{p},$%
\begin{equation*}
\sum_{k=1}^{\infty }w(k)f^{p}(k)\leq \mathcal{\varphi }_{p}\left(
p_{0},p,[w]_{\mathcal{A}_{p}}\right) \sum_{k=1}^{\infty }w(k)g^{p}(k).
\end{equation*}%
If $p<p_{0},$ then 
\begin{equation*}
\mathcal{\varphi }_{p}\left( p_{0},p,[w]_{\mathcal{A}_{p}}\right) =2^{\frac{%
p_{0}-p}{p_{0}}}\mathcal{\varphi }_{p_{0}}\left( \left( 2\left\Vert \mathcal{%
M}\right\Vert _{\ell _{p}(w)}\right) ^{p_{0}-p}\left[ w\right] _{\mathcal{A}%
_{p}}\right) ,
\end{equation*}%
and if $p>p_{0},$ then 
\begin{equation*}
\mathcal{\varphi }_{p}\left( p_{0},p,[w]_{\mathcal{A}_{p}}\right) =2^{\frac{%
p-p_{0}}{\left( p-1\right) p_{0}}}\mathcal{\varphi }_{p_{0}}\left( \left(
2\left\Vert \mathcal{M}\right\Vert _{\ell _{(w^{1-p^{\prime }})}^{p^{\prime
}}}\right) ^{\frac{p-p_{0}}{p-1}}\left[ w\right] _{\mathcal{A}_{p}}^{\frac{%
p_{0}-1}{p-1}}\right) ,
\end{equation*}%
with $p^{\prime }=p/\left( p-1\right) .$
\end{theorem}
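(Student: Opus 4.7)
The strategy is a direct case analysis on the position of $p$ relative to $p_0$, reducing everything to the two preceding theorems.

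First, I would handle the trivial case $p=p_0$. Here the hypothesis (\ref{3,2}) applied with $w_0 = w \in \mathcal{A}_{p_0}$ is literally the conclusion, with the constant $\varphi_{p_0}([w]_{\mathcal{A}_{p_0}})$, which agrees with both limiting forms of $\varphi_p$ as $p \to p_0$.

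Next, for $1 < p < p_0$, I would first rewrite the hypothesis (\ref{3,2}) in norm form by extracting $1/p_0$-th powers:
\begin{equation*}
\left(\sum_{k=1}^{\infty} w_0(k) f^{p_0}(k)\right)^{1/p_0} \leq \bigl[\varphi_{p_0}([w_0]_{\mathcal{A}_{p_0}})\bigr]^{1/p_0} \left(\sum_{k=1}^{\infty} w_0(k) g^{p_0}(k)\right)^{1/p_0},
\end{equation*}
which is exactly hypothesis (\ref{*}) of Theorem \ref{T1} with the increasing function $\tilde{\varphi}_{p_0} := \varphi_{p_0}^{1/p_0}$. Applying Theorem \ref{T1} then yields the norm inequality (\ref{*1}), and raising both sides to the $p$-th power produces the desired sum inequality. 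The constant that emerges is the $p$-th power of (\ref{**}), which after simplification gives the claimed $\varphi_p(p_0,p,[w]_{\mathcal{A}_p})$ up to the factor accounting for the $1/p_0$ versus non-normalized form.

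For $1 < p_0 < p$, I would apply the same reformulation and invoke Theorem \ref{T2} instead, yielding the constant (\ref{**12}) raised appropriately; the key observation is that Theorem \ref{T2} already absorbs the dual Hardy-Littlewood operator norm $\|\mathcal{M}\|_{\ell_{p'}(w^{1-p'})}$ via identity (\ref{c25}), so no further manipulation is needed.

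I do not anticipate a genuine obstacle, since Theorems \ref{T1} and \ref{T2} do all the substantive work; the only care required is bookkeeping on the exponents and on how $\varphi_{p_0}$ transforms between its sum-form and norm-form presentations so that the stated constants match exactly. In particular, one must verify that the monotonicity of $\varphi_{p_0}$ is preserved under the map $t \mapsto \varphi_{p_0}(t)^{1/p_0}$ (which it is, since $1/p_0 > 0$), so that the hypothesis of Theorems \ref{T1} and \ref{T2} is honestly satisfied.
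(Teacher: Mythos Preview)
Your approach is exactly the paper's: the authors simply write ``By combining Theorem~\ref{T1} and Theorem~\ref{T2}, we get the following theorem,'' with no further argument, so the case split $p<p_{0}$ versus $p>p_{0}$ invoking the two preceding results is all that is intended.

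You are right to flag the normalization issue: Theorems~\ref{T1} and~\ref{T2} are stated for the \emph{norm} form of the hypothesis (with the $1/p_{0}$-th root), whereas (\ref{3,2}) in Theorem~\ref{T3} is the \emph{sum} form, so passing through $\tilde{\varphi}_{p_{0}}=\varphi_{p_{0}}^{1/p_{0}}$ and then raising the conclusion to the $p$-th power produces a constant of the shape $2^{p(p_{0}-p)/p_{0}}\,\varphi_{p_{0}}(\cdots)^{p/p_{0}}$ rather than the stated $2^{(p_{0}-p)/p_{0}}\,\varphi_{p_{0}}(\cdots)$. The paper does not address this discrepancy; your observation that the constants match only ``up to'' this conversion factor is accurate and is a point the paper glosses over.
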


Now, we are ready to state and prove the main theorem.

\begin{theorem}
Assume that $\mathcal{T}$ be a given operator defined on $\mathbb{Z}_{+}$
and there exists a positive an increasing function $\mathcal{\varphi }%
_{p_{0}}$ on $[1,\infty )$ such that for every $w_{0}\in \mathcal{A}_{p_{0}}$%
\begin{equation*}
\sum_{k=1}^{\infty }\left\vert \mathcal{T}f(k)\right\vert
^{p_{0}}w_{0}(k)\leq \varphi _{p_{0}}\left( \left[ w\right] _{\mathcal{A}%
_{p_{0}}}\right) \sum_{k=1}^{\infty }\left\vert f(k)\right\vert
^{p_{0}}w_{0}(k),
\end{equation*}%
\ for some $p_{0},$ $1\leq p_{0}<\infty .$ Then 
\begin{equation*}
\sum_{k=1}^{\infty }\left\vert \mathcal{T}f(k)\right\vert ^{p_{0}}w(k)\leq 
\mathcal{\varphi }_{p}\left( p_{0},p,[w]_{\mathcal{A}_{p}}\right)
\sum_{k=1}^{\infty }\left\vert f(k)\right\vert ^{p_{0}}w(k),
\end{equation*}%
for every $w\in \mathcal{A}_{p}$ such that $1\leq p<\infty $.

If $p<p_{0},$ then 
\begin{equation*}
\mathcal{\varphi }_{p}\left( p_{0},p,[w]_{\mathcal{A}_{p}}\right) =2^{\frac{%
p_{0}-p}{p_{0}}}\mathcal{\varphi }_{p_{0}}\left( \left( 2\left\Vert \mathcal{%
M}\right\Vert _{\ell _{p}(w)}\right) ^{p_{0}-p}\left[ w\right] _{\mathcal{A}%
_{p}}\right) .
\end{equation*}%
If $p>p_{0},$ then 
\begin{equation*}
\mathcal{\varphi }_{p}\left( p_{0},p,[w]_{\mathcal{A}_{p}}\right) =2^{\frac{%
p-p_{0}}{\left( p-1\right) p_{0}}}\mathcal{\varphi }_{p_{0}}\left( \left(
2\left\Vert \mathcal{M}\right\Vert _{\ell _{p^{\prime }}(w^{1-p^{\prime
}})}\right) ^{\frac{p-p_{0}}{p-1}}\left[ w\right] _{\mathcal{A}_{p}}^{\frac{%
p_{0}-1}{p-1}}\right) ,
\end{equation*}%
where $p^{\prime }=p/\left( p-1\right) .$
\end{theorem}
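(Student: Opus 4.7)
The plan is to reduce directly to Theorem \ref{T3} by specialising the pair $(f,g)$ of nonnegative sequences to $(|\mathcal{T}f|,|f|)$. The crucial structural point is that Theorems \ref{T1}, \ref{T2}, and hence \ref{T3}, were established for \emph{arbitrary} pairs of nonnegative sequences, with no assumption at all linking the two members of the pair. This is precisely what lets $\mathcal{T}$ be an arbitrary (not necessarily linear or sublinear) operator in the present statement: $\mathcal{T}$ enters only through the single nonnegative sequence $|\mathcal{T}f|$, and no manipulation of $\mathcal{T}$ itself is ever required.

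Concretely, I would set $F(k):=|\mathcal{T}f(k)|$ and $G(k):=|f(k)|$ for $k\in\mathbb{Z}_{+}$. Both are nonnegative sequences, so the standing hypothesis of the theorem,
\begin{equation*}
\sum_{k=1}^{\infty }F^{p_{0}}(k)w_{0}(k)\leq \varphi _{p_{0}}\!\left( [w_{0}]_{\mathcal{A}_{p_{0}}}\right) \sum_{k=1}^{\infty }G^{p_{0}}(k)w_{0}(k)
\end{equation*}
valid for every $w_{0}\in \mathcal{A}_{p_{0}}$, is exactly the hypothesis \eqref{3,2} of Theorem \ref{T3} applied to the pair $(F,G)$ with the same function $\varphi _{p_{0}}$.

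Now fix $p$ with $1<p<\infty $ and $w\in \mathcal{A}_{p}$. A direct invocation of Theorem \ref{T3} with the pair $(F,G)$ yields
\begin{equation*}
\sum_{k=1}^{\infty }w(k)F^{p}(k)\leq \mathcal{\varphi }_{p}\!\left( p_{0},p,[w]_{\mathcal{A}_{p}}\right) \sum_{k=1}^{\infty }w(k)G^{p}(k),
\end{equation*}
which is the desired extrapolated inequality once $F$ and $G$ are rewritten in terms of $\mathcal{T}f$ and $f$. The two explicit formulae for $\mathcal{\varphi }_{p}$ are inherited verbatim from Theorem \ref{T3}: the case $1<p<p_{0}$ carries the constant from Theorem \ref{T1}, the case $p_{0}<p<\infty $ carries the constant from Theorem \ref{T2}, and the boundary case $p=p_{0}$ follows trivially by choosing $w_{0}=w$ in the hypothesis.

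There is genuinely no obstacle at this stage of the argument. All the analytic content -- the boundedness of $\mathcal{M}$ and $\mathcal{M}^{\prime }$ in the appropriate weighted spaces, the Rubio de Francia iterated algorithm $\mathcal{N}$ and its dual $\mathcal{N}^{\prime }$ (Lemmas \ref{L} and \ref{L1}), and the reverse factorization estimates for $\mathcal{A}_{p}$ weights (Lemmas \ref{L*} and \ref{L1*}) -- has already been concentrated into Theorems \ref{T1}--\ref{T3}. The final theorem is thus a clean corollary, and the fact that $\mathcal{T}$ need be neither linear nor sublinear (improving the hypothesis used in \cite{saker}) is an automatic by-product of having phrased the intermediate theorems purely for pairs of sequences.
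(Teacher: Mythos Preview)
Your proposal is correct and follows essentially the same approach as the paper: the paper's own proof simply says that the result is obtained from Theorem \ref{T3} by considering the pair $(\mathcal{T}f,f)$ in place of $(f,g)$. Your write-up is just a more detailed version of that one-line reduction.
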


\begin{proof}
The proof is obtained from Theorem \ref{T3} by considering the pair $(%
\mathcal{T}f,f)$ instead of $(f,g).$ The proof is complete.
\end{proof}

In the following, we prove two corollaries of Theorem \ref{T3}. These
results further illustrate the value of the extrapolation of families of
pairs of functions. To prove these results we need the following inclusion
properties of Muckenhoupt classes that has been proved in \cite{samir}.

\begin{theorem}
\label{Muck2}Let $w$ be a positive weight, $p,$ $q$ be nonnegative real
numbers. Then the following inclusion relations hold:

$(1)$. $\mathcal{A}_{1}\subset \mathcal{A}_{p}\subset \mathcal{A}_{\infty }$%
, for all $1<p<\infty ,$

$(2)$. $\mathcal{A}_{p}\subset \mathcal{A}_{q}$ for all $1<p\leq q,$

$(3).$ $\mathcal{A}_{\infty }=\bigcup\limits_{1<p}\mathcal{A}_{p}$ with $%
\left[ w\right] _{\mathcal{A}_{\infty }}=\lim_{p\rightarrow \infty }\left[ w%
\right] _{\mathcal{A}_{p}}$ and $\mathcal{A}_{1}\subset \bigcap {}_{p>1}%
\mathcal{A}_{p}.$
\end{theorem}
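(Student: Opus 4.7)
The approach is to establish all three parts by directly comparing, on each cell $\{1,\dots,n\}$, the averages that enter the defining Muckenhoupt quantities, with the discrete H\"older (power--mean) inequality for the dual factor $w^{-1/(p-1)}$ and the AM--GM inequality providing the bridge to $\mathcal{A}_\infty$. After establishing the cell-wise inequality one takes the supremum over $n$ on both sides to obtain the inclusion of norms. The principal obstacle is part $(3)$, in particular the equality $[w]_{\mathcal{A}_\infty}=\lim_{p\to\infty}[w]_{\mathcal{A}_p}$, which forces an interchange of $\sup_n$ with $\lim_{p\to\infty}$.

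For $(2)$, fix $1<p\le q<\infty$ and $w\in\mathcal{A}_p$. With $\alpha=(q-1)/(p-1)\ge 1$, applying the discrete H\"older inequality to the sequence $\{w^{-1/(q-1)}(k)\}_{k=1}^{n}$ written as $(w^{-1/(p-1)})^{1/\alpha}\cdot 1^{1-1/\alpha}$ yields
\[
\left(\frac{1}{n}\sum_{k=1}^n w^{-1/(q-1)}(k)\right)^{q-1}\le\left(\frac{1}{n}\sum_{k=1}^n w^{-1/(p-1)}(k)\right)^{p-1}.
\]
Multiplying through by $\tfrac{1}{n}\sum_{k=1}^n w(k)$ and taking $\sup_n$ gives $[w]_{\mathcal{A}_q}\le[w]_{\mathcal{A}_p}$.

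For $(1)$, the inclusion $\mathcal{A}_1\subset\mathcal{A}_p$ is handled by exploiting that $w\in\mathcal{A}_1$ forces $\tfrac{1}{n}\sum_{j=1}^n w(j)\le[w]_{\mathcal{A}_1}\,w(k)$ on each cell (the $\mathcal{A}_1$ condition applied at each $k$ in the cell): inverting, raising to the power $1/(p-1)$, averaging, and substituting back into the $\mathcal{A}_p$ quantity yields $[w]_{\mathcal{A}_p}\le[w]_{\mathcal{A}_1}$. For $\mathcal{A}_p\subset\mathcal{A}_\infty$, AM--GM applied to $\{w^{-1/(p-1)}(k)\}_{k=1}^n$, raised to the $(p-1)$ power, produces
\[
\exp\left(\frac{1}{n}\sum_{k=1}^n\log\frac{1}{w(k)}\right)\le\left(\frac{1}{n}\sum_{k=1}^n w^{-1/(p-1)}(k)\right)^{p-1},
\]
which after multiplying by $\tfrac{1}{n}\sum w$ and taking $\sup_n$ delivers $[w]_{\mathcal{A}_\infty}\le[w]_{\mathcal{A}_p}$.

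For $(3)$, the containment $\bigcup_{p>1}\mathcal{A}_p\subset\mathcal{A}_\infty$ is immediate from the $\mathcal{A}_p\subset\mathcal{A}_\infty$ inclusion of $(1)$, and the monotonicity from $(2)$ makes $[w]_{\mathcal{A}_p}$ nonincreasing in $p$ with limit $\ge[w]_{\mathcal{A}_\infty}$. For the reverse inequality, set $s=1/(p-1)$ and expand $a^{-s}=1-s\log a+O(s^2)$ on a fixed cell to obtain the pointwise convergence
\[
\lim_{s\to 0^+}\left(\frac{1}{n}\sum_{k=1}^n w^{-s}(k)\right)^{1/s}=\exp\left(\frac{1}{n}\sum_{k=1}^n\log\frac{1}{w(k)}\right).
\]
The hard part is the interchange of $\sup_n$ with $\lim_{p\to\infty}$; I would justify it by taking $\sup_n$ first (giving $[w]_{\mathcal{A}_p}$ on the left at each $p$) and then passing to the limit using monotonicity from $(2)$, obtaining $\lim_p[w]_{\mathcal{A}_p}\le[w]_{\mathcal{A}_\infty}$. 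Combined with the other direction this yields the stated identity, and hence finite $[w]_{\mathcal{A}_\infty}$ forces $[w]_{\mathcal{A}_p}<\infty$ for all sufficiently large $p$, giving $\mathcal{A}_\infty\subset\bigcup_{p>1}\mathcal{A}_p$. The final inclusion $\mathcal{A}_1\subset\bigcap_{p>1}\mathcal{A}_p$ is then a reapplication of the first inclusion of $(1)$ at each $p$.
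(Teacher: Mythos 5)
The paper offers no proof of this theorem: it is quoted verbatim from \cite{samir}, so there is no internal argument to compare yours against, and I can only assess your proposal on its own merits. Your treatments of $(2)$ (power--mean/H\"older on the dual factor $w^{-1/(p-1)}$), of $\mathcal{A}_1\subset\mathcal{A}_p$ (the pointwise bound $\frac1n\sum_{j\le n}w(j)\le[w]_{\mathcal{A}_1}w(k)$, inverted and averaged), and of $\mathcal{A}_p\subset\mathcal{A}_\infty$ (AM--GM) are all correct and give the stated monotonicity of the constants.

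The gap is in part $(3)$. Your one-cell limit
\[
\lim_{s\to 0^+}\Bigl(\tfrac1n\sum_{k=1}^n w^{-s}(k)\Bigr)^{1/s}=\exp\Bigl(\tfrac1n\sum_{k=1}^n\log\tfrac1{w(k)}\Bigr)
\]
is fine, but pointwise convergence in $n$ only yields $\sup_n\lim_p\le\lim_p\sup_n$, i.e.\ $[w]_{\mathcal{A}_\infty}\le\lim_{p\to\infty}[w]_{\mathcal{A}_p}$, which is the direction you already have from $(1)$ and $(2)$. The justification you offer for the reverse inequality --- ``take $\sup_n$ first, then pass to the limit using monotonicity'' --- is a non sequitur: that procedure produces the quantity $\lim_p[w]_{\mathcal{A}_p}$ and never compares it with $[w]_{\mathcal{A}_\infty}$. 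The needed interchange $\lim_p\sup_n\le\sup_n\lim_p$ is false for a general monotone family converging pointwise over a non-compact index set: take $A_p(n)=1+\min(1,n/p)$, which decreases pointwise to $1$ as $p\to\infty$ while $\sup_nA_p(n)=2$ for every $p$. So you need a uniformity input specific to the Muckenhoupt quantities --- in practice a reverse H\"older/self-improvement estimate --- and that same input is what actually drives the inclusion $\mathcal{A}_\infty\subset\bigcup_{p>1}\mathcal{A}_p$, which you derive from the unproved limit identity. As written, both the identity $[w]_{\mathcal{A}_\infty}=\lim_{p\to\infty}[w]_{\mathcal{A}_p}$ and the containment $\mathcal{A}_\infty\subset\bigcup_{p>1}\mathcal{A}_p$ remain open in your argument; the remaining claims of $(3)$ are fine.
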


Our first theorem yields $\ell _{p}$ inequalities with weights in $\mathcal{A%
}_{p/r}$, $p\geq r>0.$ Hereafter, by $\Omega $ we mean a family of pairs $%
(f\,,g)$ of nonnegative sequences.

\begin{corollary}
\label{Cv1}Suppose that\ for some $r>0$, and some $p_{0}$, $r\leq
p_{0}<\infty ,$ and for every $w_{0}\in \mathcal{A}_{p_{0}/r},$%
\begin{equation}
\sum_{k=1}^{\infty }w_{0}(k)f^{p_{0}}(k)\leq C\sum_{k=1}^{\infty
}w_{0}(k)g^{p_{0}}(k)\text{.}  \label{as}
\end{equation}%
Then for all $p,$ $r\leq p<\infty $ and all\ $w\in \mathcal{A}_{p/r},$%
\begin{equation}
\sum_{k=1}^{\infty }w(k)f^{p}(k)\leq C\sum_{k=1}^{\infty }w(k)g^{p}(k).
\label{as1}
\end{equation}
\end{corollary}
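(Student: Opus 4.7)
The plan is to reduce Corollary \ref{Cv1} to Theorem \ref{T3} by a simple substitution that rescales the exponents. Define the new pair of nonnegative sequences
\[
F(k):=f^{r}(k),\qquad G(k):=g^{r}(k),
\]
and set $q_{0}:=p_{0}/r$. Since $p_{0}\geq r$ and $r>0$, we have $q_{0}\geq 1$, which is precisely the range in which Theorem \ref{T3} applies.

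Next I would rewrite the hypothesis \eqref{as} in terms of $(F,G)$ and $q_{0}$. Because $f^{p_{0}}=F^{p_{0}/r}=F^{q_{0}}$ and likewise for $g$, the assumed inequality becomes: for every $w_{0}\in \mathcal{A}_{q_{0}}$,
\[
\sum_{k=1}^{\infty }w_{0}(k)F^{q_{0}}(k)\leq C\sum_{k=1}^{\infty }w_{0}(k)G^{q_{0}}(k),
\]
which is exactly the hypothesis of Theorem \ref{T3} applied to the pair $(F,G)$ with exponent $q_{0}$, with constant function $\mathcal{\varphi}_{q_{0}}\equiv C$.

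Now I would invoke Theorem \ref{T3}: for every $q$ with $1\leq q<\infty$ and every $w\in \mathcal{A}_{q}$,
\[
\sum_{k=1}^{\infty }w(k)F^{q}(k)\leq \mathcal{\varphi}_{q}\!\left(q_{0},q,[w]_{\mathcal{A}_{q}}\right)\sum_{k=1}^{\infty }w(k)G^{q}(k),
\]
where $\mathcal{\varphi}_{q}$ is built from $\mathcal{\varphi}_{q_{0}}\equiv C$ via the two explicit formulas of Theorem \ref{T3}. Finally, I choose $q:=p/r$, which satisfies $q\geq 1$ because $p\geq r$, and I note that the hypothesis $w\in \mathcal{A}_{p/r}$ is exactly $w\in \mathcal{A}_{q}$. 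Substituting $F^{q}=f^{p}$ and $G^{q}=g^{p}$ recovers \eqref{as1}.

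There is no real obstacle here, since the argument is purely a change of variables at the level of exponents; the only point that needs a little care is checking that the substitution lands in the admissible range ($q_{0}\geq 1$ and $q\geq 1$), which follows from $r\leq p_{0}$ and $r\leq p$. The one mild subtlety is that the constant $C$ in the conclusion \eqref{as1} is, strictly speaking, a function $\mathcal{\varphi}_{p/r}(p_{0}/r,p/r,[w]_{\mathcal{A}_{p/r}})$ produced by Theorem \ref{T3} from the constant $C$ on the right-hand side of \eqref{as}, and I would mention this explicitly so the reader sees where the transformed constant comes from.
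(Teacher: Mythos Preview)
Your proof is correct and follows essentially the same approach as the paper's own proof: both reduce to Theorem~\ref{T3} by the substitution $(F,G)=(f^{r},g^{r})$ and the rescaled exponent $q=p/r$, then read off the conclusion. Your extra care in checking $q_{0}\geq 1$, $q\geq 1$ and in tracking the transformed constant $\mathcal{\varphi}_{p/r}(p_{0}/r,p/r,[w]_{\mathcal{A}_{p/r}})$ is a useful clarification that the paper leaves implicit.
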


\begin{proof}
Define a new family $\Omega _{r}$ consisting of the pairs $(f^{r},g^{r})$,
where $(f,g)\in \Omega .$ Then for $w_{0}\in \mathcal{A}_{p_{0}/r}$, by (\ref%
{as}) 
\begin{eqnarray*}
\sum_{k=1}^{\infty }w_{0}(k)\left( f^{r}(k)\right) ^{p_{0}/r}
&=&\sum_{k=1}^{\infty }w_{0}(k)f^{p_{0}}(k)\leq C\sum_{k=1}^{\infty
}w_{0}(k)g^{p_{0}}(k) \\
&=&C\sum_{k=1}^{\infty }w_{0}(k)\left( g^{r}(k)\right) ^{p_{0}/r}.
\end{eqnarray*}%
Therefore, we can apply Theorem \ref{T3} with this as our initial hypothesis
and conclude that for all $q>1$ and all $w\in \mathcal{A}_{q},$%
\begin{equation*}
\sum_{k=1}^{\infty }w(k)\left( f^{r}(k)\right) ^{q}\leq C\sum_{k=1}^{\infty
}w(k)\left( g^{r}(k)\right) ^{q}\text{.}
\end{equation*}%
By putting $q=p/r$ for some $p>r$ and this is equivalent to (\ref{as1}). The
proof is complete.
\end{proof}

\begin{corollary}
Suppose that\ for some $p_{0}$, $0<p_{0}<\infty ,$ and for every $w_{0}\in 
\mathcal{A}_{\infty },$%
\begin{equation}
\sum_{k=1}^{\infty }w_{0}(k)f^{p_{0}}(k)\leq C\sum_{k=1}^{\infty
}w_{0}(k)g^{p_{0}}(k)\text{.}  \label{ac1}
\end{equation}%
Then for all $p,$ $0<p<\infty $ and all\ $w\in \mathcal{A}_{\infty },$%
\begin{equation}
\sum_{k=1}^{\infty }w(k)f^{p}(k)\leq C\sum_{k=1}^{\infty }w(k)g^{p}(k).
\label{acv1}
\end{equation}
\end{corollary}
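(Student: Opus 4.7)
My plan is to reduce the statement to the previous corollary (the one labeled \texttt{Cv1}) by exploiting the characterization
\begin{equation*}
\mathcal{A}_{\infty}=\bigcup_{q>1}\mathcal{A}_{q}
\end{equation*}
from Theorem \ref{Muck2}(3). The hypothesis is given for every weight in $\mathcal{A}_{\infty}$, which is a very strong input: in particular it holds for every weight in $\mathcal{A}_{p_{0}/r}$ for any admissible $r>0$, so we can feed it into Corollary \ref{Cv1} for whatever value of $r$ we wish to choose.

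Concretely, fix a target exponent $p$ with $0<p<\infty$ and a target weight $w\in\mathcal{A}_{\infty}$. By Theorem \ref{Muck2}(3) there exists $q_{0}>1$ with $w\in\mathcal{A}_{q_{0}}$. I would then select
\begin{equation*}
r:=\min\!\left(p_{0},\,\tfrac{p}{q_{0}}\right)>0,
\end{equation*}
which satisfies simultaneously $r\leq p_{0}$, $r\leq p$ (since $q_{0}\geq 1$), and $p/r\geq q_{0}$. The last inequality together with Theorem \ref{Muck2}(2) gives $w\in\mathcal{A}_{q_{0}}\subset\mathcal{A}_{p/r}$. Moreover, since $\mathcal{A}_{p_{0}/r}\subset\mathcal{A}_{\infty}$ by Theorem \ref{Muck2}(1), the assumed inequality (\ref{ac1}) holds in particular for every $w_{0}\in\mathcal{A}_{p_{0}/r}$ with the same constant $C$.

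At this point the hypothesis of Corollary \ref{Cv1} is verified with the chosen $r$ and $p_{0}$, so applying that corollary yields (\ref{acv1}) for our target pair $(p,w)$. Since $p$ and $w\in\mathcal{A}_{\infty}$ were arbitrary, the corollary follows. I do not anticipate any serious obstacle: the proof is essentially a selection argument, and the only thing to be careful about is ensuring the three constraints $r\leq p_{0}$, $r\leq p$, and $p/r\geq q_{0}$ can be met simultaneously, which is immediate because $q_{0}\geq 1$ forces $p/q_{0}\leq p$.
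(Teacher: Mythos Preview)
Your argument is correct and follows essentially the same route as the paper: both proofs use the characterization $\mathcal{A}_{\infty}=\bigcup_{q>1}\mathcal{A}_{q}$ together with the nesting $\mathcal{A}_{q_{0}}\subset\mathcal{A}_{p/r}$ to reduce to the rescaled extrapolation result, the only cosmetic difference being that you invoke Corollary~\ref{Cv1} as a black box while the paper unwinds that corollary and applies Theorem~\ref{T3} directly after passing to the family $(f^{p_{0}/r},g^{p_{0}/r})$. Your parameter choice $r=\min(p_{0},p/q_{0})$ is in fact a bit more explicit than the paper's, which selects $r$ and $s$ a posteriori via $(p_{0}/r)s=p$.
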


\begin{proof}
Fix $r,$ $1<r<\infty $ and define a new family $\Omega _{r}$ consisting of
the pairs $(f^{p_{0}/r},g^{p_{0}/r})$, with $(f,g)\in \Omega .$ Then by
inequality (\ref{ac1})$,$ for for every $w_{0}\in \mathcal{A}_{r}\subset 
\mathcal{A}_{\infty }$ and $(f^{p_{0}/r},g^{p_{0}/r})\in \Omega _{r}$%
\begin{eqnarray*}
\sum_{k=1}^{\infty }w_{0}(k)\left( f^{p_{0}/r}(k)\right) ^{r}
&=&\sum_{k=1}^{\infty }w_{0}(k)\left( f^{p_{0}}(k)\right) \leq
C\sum_{k=1}^{\infty }w_{0}(k)g^{p_{0}}(k) \\
&=&C\sum_{k=1}^{\infty }w_{0}(k)\left( g^{p_{0}/r}(k)\right) ^{r}.
\end{eqnarray*}%
This gives the initial estimate (\ref{3,2}) for $\Omega _{r}$ and with $r$
in place of $p_{0}.$ Therefore by Theorem \ref{T3}, we have that for every $%
s,$ $1<s<\infty $ and $w\in \mathcal{A}_{s}$%
\begin{equation}
\sum_{k=1}^{\infty }w(k)\left( f^{p_{0}/r}(k)\right) ^{s}\leq
C\sum_{k=1}^{\infty }w(k)\left( g^{p_{0}/r}(k)\right) ^{s}.  \label{ac3}
\end{equation}%
Fix $p>1$ and $w\in \mathcal{A}_{\infty }.$ By Theorem \ref{Muck2}, since $%
\mathcal{A}_{p}$ are nested, we may assume without loss of generality that $%
w\in \mathcal{A}_{s}\subset \mathcal{A}_{\infty }$ for $s>p/p_{0}.$
Therefore, we can choose $r>1$ such that $\left( p_{0}/r\right) s=p$ and
then the inequality (\ref{ac3}) yields the inequality (\ref{acv1}). The
proof is complete.
\end{proof}

\end{document}